\documentclass[3p]{elsarticle}
\usepackage{amsmath,amsfonts,mathtools,mathdots,amssymb,amsthm}
\usepackage{thmtools,cleveref}
\usepackage{enumitem,multirow,array}
\usepackage{caption,subcaption}
\usepackage{algorithm,algorithmicx}
\usepackage{url}
\newcommand\scalemath[2]{\scalebox{#1}{\mbox{\ensuremath{\displaystyle #2}}}}

\DeclareMathOperator{\rank}{rank}
\DeclareMathOperator{\rev}{rev}

\DeclareMathOperator*{\argmin}{arg\,min}
\DeclareMathOperator{\Image}{Im}

\newcommand{\CC}{\mathbb{C}} 
\newcommand{\RR}{\mathbb{R}} 
 
\newcommand*{\Scale}[2][4]{\scalebox{#1}{\ensuremath{#2}}}
\newcommand{\bigzero}{\mbox{\normalfont\Large\bfseries 0}}
\newcommand{\bigI}{\mbox{\normalfont\Large\bfseries I}}

\newtheorem{thm}{Theorem}[section]
\newtheorem{prop}[thm]{Proposition}
\newtheorem{lem}[thm]{Lemma}
\newtheorem{cor}[thm]{Corollary}
\newtheorem{example}[thm]{Example}
\newtheorem{defn}[thm]{Definition}

\allowdisplaybreaks

\begin{document}

\begin{frontmatter}
\title{Real Factorization of Positive Semidefinite Matrix Polynomials \tnoteref{t1}}
\author{Sarah Gift\corref{cor1}}
	\ead{sg3664@drexel.edu}
\author{Hugo J. Woerdeman}
	\ead{hjw27@drexel.edu}
\address{Department of Mathematics, Drexel University, 
            Philadelphia, PA 19104, USA}
\tnotetext[t1]{This work was supported by the National Science Foundation Grant DMS 2000037 }
\cortext[cor1]{Corresponding author}

\begin{abstract}
Suppose $Q(x)$ is a real $n\times n$ regular symmetric positive semidefinite matrix polynomial.  Then it can be factored as
$$Q(x) = G(x)^TG(x),$$
where $G(x)$ is a real $n\times n$ matrix polynomial with degree half that of $Q(x)$ if and only if $\det(Q(x))$ is the square of a nonzero real polynomial.  We provide a  constructive proof of this fact, rooted in finding a skew-symmetric solution to a modified algebraic Riccati equation
$$XSX - XR + R^TX + P = 0,$$
where $P,R,S$ are real $n\times n$ matrices with $P$ and $S$ real symmetric.  In addition, we provide a detailed algorithm for computing the factorization.
\end{abstract}

\begin{keyword}
Positive semidefinite matrix polynomial \sep Algebraic Riccati equation \sep Matrix factorization
\MSC[2020] 47A68 \sep 46C20 \sep 15B48 \sep 93B05
\end{keyword}

\end{frontmatter}

\section{Introduction}
The Fej\'{e}r-Riesz factorization was first shown for matrix polynomials by Rosenblatt \cite{494} and Helson \cite{307}.  Its version on the real line is the following: given a matrix polynomial $Q(x) = \sum_{i=0}^{2m}Q_ix^i$ with $Q_i$ Hermitian and $Q(x)$ positive semidefinite for all $x\in\RR$, we can factorize it
as
$$Q(x) = G(x)^*G(x) , $$
where $G(x) =\sum_{i=0}^mG_ix^i$. In 1964, Gohberg generalized this factorization to certain operator-valued polynomials \cite{259}.  Later it was further generalized to operator-valued polynomials in general form \cite{495}.  The multivariable case has also been studied, e.g. in \cite{multivariable}.  For an overview of the work done with the operator-valued Fej\'{e}r-Riesz theorem, see \cite{500}.  Fej\'{e}r-Riesz factorization has applications in $H^\infty$-control
\cite{Francis}, in the construction of compactly supported wavelets \cite[Chapter 6]{Ten}, filter design \cite{Pextensions}, determinantal representations \cite{Stable}, and prediction theory \cite[Chapter XII]{Stochastic}, \cite{Ftrig, Zorzi}. In some cases, one may want to insist on having a real factorization. Indeed, our motivation for finding a real factorization came from an interest in constructing real symmetric solutions to A. Horn's problem, where the eigenvalues of two real symmetric matrices are prescribed, as well as the eigenvalues of their sum. Adjusting the techniques in \cite{Cao} to the real case required finding a real Fej\'{e}r-Riesz factorization. 

In this paper, we provide a constructive proof of the real analog of the Fej\'{e}r-Riesz factorization of matrix-valued polynomials.  In particular, given a matrix polynomial $Q(x) = \sum_{i=0}^{2m}Q_ix^i$ with $Q_i$ $n\times n$ real symmetric, $Q(x)$ positive semidefinite for all $x\in\RR$, and $\det(Q(x))$ equal to the square of a nonzero real polynomial, we show that $Q(x)$ admits the factorization
$$Q(x) = G(x)^TG(x) , $$
where $G(x)=\sum_{i=0}^mG_ix^i$ with $G_i$ real $n\times n$ matrices.
This result was first shown by Hanselka and Sinn \cite{Hanselka} using methods from projective algebraic geometry and number theory.  We provide an alternative, linear algebraic proof, inspired by the proof of the Fej\'{e}r-Riesz factorization presented in Section 2.7 of \cite{Moments}.  That proof in turn, was taken from \cite{187, Hachez}.   For earlier work on factorizations of real symmetric matrix polynomials (not necessarily positive semidefinite) see e.g. \cite{Symmetries}.

A key part of the Fej\'{e}r-Riesz factorization proof we follow requires finding a Hermitian solution to an algebraic Riccati equation
\begin{equation}\tag{\textasteriskcentered} \label{CARE}
XDX + XA + A^*X - C = 0,
\end{equation}
where $D$ and $C$ are Hermitian.  Reducing a factorization problem to solving a Riccati equation is a technique that has been used in many other papers as well  (see, e.g., \cite{Spectral}, \cite{Francis}, \cite[Chapter 19]{Ric} and references therein).  This technique is useful because Riccati equations have been studied extensively.  Early work was done by Willems \cite{Willems} and Coppel \cite{Coppel} in analyzing properties of solutions of continuous algebraic Riccati equations.  Another key paper was \cite{49} where Riccati equations were used to solve $H_\infty$-control problems. For an in depth analysis of algebraic Riccati equations, please see the book by Lancaster and Rodman \cite{Ric}.  

For the current real factorization problem, we end up needing to find a real skew-symmetric solution to an equation of the form
\begin{equation}\tag{\textasteriskcentered\textasteriskcentered}\label{MCARE}
XSX - XR + R^TX + P = 0,
\end{equation}
where  $P$ and $S$ are real symmetric.  This is not quite an algebraic Riccati equation of the form (\ref{CARE}) and thus we call it a modified algebraic Riccati equation.  In general, to find a skew-Hermitian solution $X$, one often considers instead $iX$, which is Hermitian; however, we want real solutions and thus this method is not applicable here.  Thus we instead follow the same steps presented in \cite{Ric} for finding a real \textit{symmetric} solution to the real algebraic Riccati equation (\ref{CARE}) and amend them to our current situation.  This is the topic of \Cref{ricatti}, which culminates in giving sufficient conditions for a skew-symmetric solution of our modified Riccati equation (\ref{MCARE}).  In \Cref{poly} we provide additional background on matrix polynomials necessary for our main result, the real factorization of a symmetric positive semidefinite matrix polynomial, presented in \Cref{main}.  The major advantage of our proof compared to that by Hanselka and Sinn \cite{Hanselka} is that ours is constructive.  Thus we provide an explicit algorithm for finding the factorization, along with examples illustrating the construction.  
 
\section{A Modified Algebraic Riccati Equation}\label{ricatti}

The goal of this section is to provide necessary and sufficient conditions for the existence of a real skew-symmetric solution $X$ to the modified algebraic Riccati equation
\begin{equation}
XSX - XR + R^TX + P = 0 \label{RIC},
\end{equation}
where $P,R,S$ are real $n\times n$ matrices with $P$ and $S$ real symmetric.  
In Chapter 8 of the book \textit{Algebraic Riccati Equations} \cite{Ric}, Lancaster and Rodman show conditions for which there is a real symmetric solution $X$ to the continuous time algebraic Riccati equation
$$XDX + XA + A^TX - C = 0,$$ 
where $A,C,D$ are real $n\times n$ matrices with $C$ and $D$ real symmetric.  We amend these results to the present situation.
Define the $2n\times 2n$ real matrices
\begin{equation}\label{Mr}
M_r = \begin{bmatrix} R & -S \\  P & R^T\end{bmatrix}, \qquad
\hat{H}_r = \begin{bmatrix} 0 & I \\ I & 0 \end{bmatrix}, \qquad
H_r = \begin{bmatrix} P & R^T \\ R & -S \end{bmatrix}.
\end{equation}
Then both $\hat{H}_r$ and $H_r$ are real symmetric.  Also 
$$\hat{H}_rM_r = M_r^T\hat{H}_r \qquad \text{and} \qquad H_rM_r = M_r^TH_r.$$
Using terminology from \cite[Section 2.6]{Ric}, we say $M_r$ is both $H_r$-symmetric and $\hat{H}_r$-symmetric. Next, we define the graph subspace of a real $n\times n$ matrix $X$ by
$$G(X) := \Image \begin{bmatrix} I_n \\ X \end{bmatrix} = \left\{ \begin{bmatrix} I_n \\ X \end{bmatrix} x : x\in \RR^{n}\right\}.$$
We can now give a condition for a real solution of \cref{RIC} to exist.

\begin{prop}
\label{P711}
$X$ is a real solution of \cref{RIC} if and only if the graph subspace $G(X)$
is $M_r$-invariant, where $M_r$ is defined as in \cref{Mr}.
\end{prop}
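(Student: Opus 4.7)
The plan is to establish both directions by a single block matrix computation. The key reformulation is that $G(X)$ is $M_r$-invariant if and only if there exists a real $n \times n$ matrix $Y$ with $M_r \begin{bmatrix} I_n \\ X \end{bmatrix} = \begin{bmatrix} I_n \\ X \end{bmatrix} Y$; this works because the columns of $\begin{bmatrix} I_n \\ X \end{bmatrix}$ span $G(X)$, and the leading $I_n$ block forces any such $Y$ to be unique.

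With this in hand, I would multiply out the left-hand side using the definition of $M_r$ in \Cref{Mr} to obtain $M_r \begin{bmatrix} I_n \\ X \end{bmatrix} = \begin{bmatrix} R - SX \\ P + R^TX \end{bmatrix}$. Comparing top blocks pins down $Y = R - SX$, and then the bottom-block identity $XY = P + R^TX$ reads $X(R - SX) = P + R^TX$, which rearranges to exactly \cref{RIC}. Both directions of the proposition now follow immediately: if $X$ solves \cref{RIC}, taking $Y := R - SX$ witnesses the invariance of $G(X)$; conversely, $M_r$-invariance forces that same $Y$ and the bottom block yields the Riccati equation.

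The proof is a short exercise in block-matrix algebra, so there is no real obstacle. The only subtle point worth recording carefully is the uniqueness of $Y$ coming from the top identity block, since this is what upgrades the implication ``$X$ solves \cref{RIC} $\Rightarrow$ $G(X)$ is $M_r$-invariant'' into an equivalence rather than a one-sided statement.
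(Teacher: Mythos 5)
Your proposal is correct and follows essentially the same block-matrix computation as the paper's proof: both reduce $M_r$-invariance of $G(X)$ to the existence of $Z$ (your $Y$) with $M_r \begin{bmatrix} I \\ X \end{bmatrix} = \begin{bmatrix} I \\ X \end{bmatrix} Z$, read off $Z = R - SX$ from the top block, and match the bottom block to the Riccati equation. Your explicit remark about uniqueness of $Y$ forced by the identity top block is a nice touch but is implicit in the paper's argument as well.
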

\begin{proof}
If $G(X)$ is $M_r$-invariant, then 
\begin{equation}\label{invariant}
	\begin{bmatrix} R & -S \\ P & R^T\end{bmatrix} 
	\begin{bmatrix} I \\ X \end{bmatrix} 
	=  \begin{bmatrix} I \\ X \end{bmatrix} Z
\end{equation}
for some $n\times n$ matrix $Z$.  The first block row gives $Z = R-SX$ and the second gives
$P + R^TX = XZ$.  Combining the two gives
$$ P + R^TX = X(R - SX).$$
Thus $X$ solves \cref{RIC}.  Conversely, if $X$ solves \cref{RIC}, then \cref{invariant} holds for $Z = R-SX$ and thus $G(X)$ is $M_r$-invariant.
\end{proof}

\noindent
More than just a real solution, though, we want a skew-symmetric solution.  Thus we next strive to give a condition for such a solution.  For this we first need a few definitions (see \cite[Section 2.6]{Ric}).  For $x,y\in\RR^n$, we use the notation $\langle x, y \rangle$ to mean the usual inner product $y^Tx$.
\begin{defn}
\rm
Let $H$ be an $n\times n$ invertible real symmetric matrix.  A subspace $\mathcal{M}$ of $\RR^n$ is called 
\begin{enumerate}
	\item \textit{$H$-nonnegative} if $\langle Hx, x \rangle \ge 0$ for all $x\in \mathcal{M}$.
	\item \textit{$H$-nonpositive} if $\langle Hx, x \rangle \le 0$ for all $x\in \mathcal{M}$.
	\item \textit{$H$-neutral} if $\langle Hx, x \rangle = 0$ for all $x\in \mathcal{M}$.
\end{enumerate}
\end{defn}

\begin{prop}
\label{P811}
\text{ }
\begin{enumerate}
	\item For $\hat{H}_r$ as defined in \cref{Mr}, $X\in{\mathbb R}^{n\times n}$ is skew-symmetric if and only if the graph subspace $G(X)$ is $\hat{H}_r$-neutral.
	\item Let $X$ be a real solution of \cref{RIC}.  Then for $H_r$ as defined in \cref{Mr}, $G(X)$ is $H_r$-nonpositive if and only if $(X^T+X)(R-SX)$ is negative semi-definite.
\end{enumerate}
\end{prop}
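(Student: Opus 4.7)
The plan is to do a direct quadratic-form computation in each part. For any $v \in G(X)$, write $v = \begin{bmatrix} I \\ X \end{bmatrix}x$ with $x \in \mathbb{R}^n$, so that $\langle H v, v \rangle$ becomes a quadratic form in $x$ whose matrix can be read off explicitly from the block structure of $\hat{H}_r$ or $H_r$.

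For part (1), carrying out this substitution with $\hat{H}_r$ yields $\langle \hat{H}_r v, v \rangle = x^T(X + X^T)x$. Since $X + X^T$ is symmetric, $G(X)$ is $\hat{H}_r$-neutral if and only if $x^T(X + X^T)x = 0$ for every $x \in \mathbb{R}^n$, which is equivalent to $X + X^T = 0$, i.e.\ to $X$ being skew-symmetric. This part requires nothing beyond the computation.

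For part (2), the same substitution applied to $H_r$ gives $\langle H_r v, v \rangle = x^T(P + X^T R + R^T X - X^T S X)x$. At this point I will invoke the Riccati equation \cref{RIC} to replace $P$ by $XR - R^T X - XSX$ and collect terms, arriving at $\langle H_r v, v \rangle = x^T(X + X^T)(R - SX)x$.

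The main subtlety, and what I expect to be the key technical step, is that $G(X)$ being $H_r$-nonpositive translates into the quadratic form above being $\le 0$ on all of $\mathbb{R}^n$, whereas the matrix $Y := (X + X^T)(R - SX)$ is not visibly symmetric; a priori only its symmetric part is detected. To close the gap I will verify directly that $Y$ is symmetric whenever $X$ satisfies \cref{RIC}. Expanding $Y - Y^T$ and regrouping, one group of terms equals $P$ by \cref{RIC}, while the remaining group equals $-P$ by the transposed identity $X^T S X^T - R^T X^T + X^T R + P = 0$ obtained from \cref{RIC}; hence $Y - Y^T = 0$. Once symmetry is in hand, $x^T Y x \le 0$ for all $x \in \mathbb{R}^n$ is equivalent to $Y$ being negative semidefinite, which is exactly the stated criterion.
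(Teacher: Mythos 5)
Your proposal is correct and follows the same direct quadratic-form computation as the paper for both parts. The one place you go beyond the paper is the explicit verification that $Y := (X+X^T)(R-SX)$ is symmetric when $X$ solves \cref{RIC}: writing $Y-Y^T = (XR - R^TX - XSX) + (X^TR - R^TX^T + X^TSX^T)$, the first group equals $P$ by \cref{RIC} and the second equals $-P$ by its transpose (using $P=P^T$, $S=S^T$), so $Y = Y^T$. The paper states without comment that $G(X)$ is $H_r$-nonpositive iff $Y$ is negative semidefinite; your symmetry check closes the small gap between "$x^TYx\le 0$ for all $x$" and "$Y$ is negative semidefinite" under the convention that the latter presupposes symmetry, and is a worthwhile addition even though it doesn't change the overall argument.
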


\begin{proof}
\begin{enumerate}
	\item $G(X)$ is $\hat{H}_r$-neutral if and only if for all $z\in\RR^n$,
	$$
			\left\langle \hat{H}_r \begin{bmatrix} I \\ X\end{bmatrix} z, \begin{bmatrix} I \\ X\end{bmatrix} z \right \rangle  = 0 .$$
		Rewriting the lefthand side, we have
			$$	\left\langle \hat{H}_r \begin{bmatrix} I \\ X\end{bmatrix} z, 
					\begin{bmatrix} I \\ X\end{bmatrix} z \right \rangle
				 = z^T \begin{bmatrix} I & X^T\end{bmatrix} 
					\hat{H}_r
					 \begin{bmatrix} I \\ X\end{bmatrix} z 
				 = z^T (X + X^T) z.$$
Thus $G(X)$ is $\hat{H}_r$-neutral if and only if $X + X^T = 0$, i.e. $X$ is skew-symmetric.
	\item  $G(X)$ is  $H_r$-nonpositive if and only if for all $z\in \RR^{n}$,
		$$
			\left\langle H_r \begin{bmatrix} I \\ X\end{bmatrix} z, \begin{bmatrix} I \\ X\end{bmatrix} z \right \rangle  \le 0 .$$
		Rewriting the lefthand side, we have
		\begin{align*}
		\left\langle H_r \begin{bmatrix} I \\ X\end{bmatrix} z, \begin{bmatrix} I \\ X\end{bmatrix} z \right \rangle
				& =  z^T \begin{bmatrix} I & X^T\end{bmatrix} 
					\begin{bmatrix} P & R^T \\ R & -S \end{bmatrix}
					 \begin{bmatrix} I \\ X\end{bmatrix} z  \\
				& =  z^T [P+R^TX + X^TR - X^TSX] z   \\
				 & = z^T [XR - XSX + X^TR - X^TSX] z   \qquad\text{by \cref{RIC} } \\
				& =  z^T (X^T+X)(R-SX) z .
		\end{align*}
		Thus $G(X)$ is  $H_r$-nonpositive if and only if $(X^T+X)(R-SX)$ is negative semidefinite.
\end{enumerate}
\end{proof}

\noindent
\Cref{P811} shows that in order to get a real skew-symmetric solution $X$ to the \cref{RIC}, we need an $\hat{H}_r$-neutral subspace $G(X)$ of dimension $n$.  We consider now conditions for such a subspace to exist.  For this we first state a few known results (see \cite[Section 2.6]{Ric}).

\begin{defn}
\rm
Let $A$ be a square matrix and $\lambda_i$ be an eigenvalue of $A$.  We call the sizes of the Jordan blocks of $\lambda_i$ the \textit{partial multiplicities} of $\lambda_i$.
\end{defn}

\begin{thm}\cite[Part of Theorem 2.6.3]{Ric} \label{A_invariant} 
Let $A$ be a real $n\times n$ $H$-symmetric matrix (meaning $H$ is symmetric and $HA = A^TH$). Suppose the partial multiplicities of the real eigenvalues of $A$ are all even.  Then there exists an $A$-invariant $H$-neutral subspace of dimension $k-p$ where $k$ is the number of positive eigenvalues of $H$ (counting algebraic multiplicities) and $p$ is the number of distinct pairs of non-real complex conjugate eigenvalues of $A$ with odd algebraic multiplicity.
\end{thm}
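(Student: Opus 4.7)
The plan is to invoke a canonical form theorem for real $H$-symmetric pairs $(A,H)$, in the spirit of \cite{Ric}, which presents $(\RR^n,H,A)$ as an $H$-orthogonal direct sum of $A$-invariant indecomposable pieces of two types: (i) for each real eigenvalue a Jordan block $J_m(\lambda)$ paired with an $H$-form $\varepsilon\Omega_m$, where $\Omega_m$ is the $m\times m$ reverse identity and $\varepsilon=\pm 1$; and (ii) for each non-real eigenvalue pair $\{\lambda,\bar\lambda\}$ and each complex Jordan block of size $l$, the real form of $J_l(\lambda)\oplus J_l(\bar\lambda)$ paired with a standard $H$-block of signature $(l,l)$. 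Because both $H$-neutrality and $A$-invariance are preserved under $H$-orthogonal direct sums, I would construct the target subspace within each summand (or each full eigenvalue-pair summand in the complex case) and then take the $H$-orthogonal sum.

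For a type (i) block, the hypothesis that all partial multiplicities of real eigenvalues are even forces $m=2\mu$. The form $\varepsilon\Omega_{2\mu}$ pairs the $i$-th Jordan basis vector with the $(2\mu+1-i)$-th, so the span of the first $\mu$ Jordan basis vectors is simultaneously $A$-invariant (an initial segment of a Jordan chain) and $H$-neutral. This block contributes $\mu$ to both the dimension of the target subspace and to $k$, so these summands are lossless.

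For a complex eigenvalue pair $\{\lambda,\bar\lambda\}$ of total complex algebraic multiplicity $q_\lambda$, I would pass to the complexification: real $A$-invariant subspaces of the $(\lambda,\bar\lambda)$-summand correspond, via $W\mapsto\{w+\bar w:w\in W\}$, to complex $A$-invariant subspaces $W\subset V_\lambda$ of the $\lambda$-generalized eigenspace, with real dimension equal to $2\dim_\CC W$. Since eigenvectors for distinct eigenvalues are $H$-orthogonal, the bilinear form $y^T Hx$ restricts to a non-degenerate symmetric complex bilinear form on $V_\lambda$, and the corresponding real subspace is $H$-neutral precisely when $W$ is isotropic for this restricted form. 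The maximum complex dimension of an $A$-invariant isotropic $W\subset V_\lambda$ is $\lfloor q_\lambda/2\rfloor$, so the real dimension contributed is $q_\lambda$ if $q_\lambda$ is even and $q_\lambda-1$ if $q_\lambda$ is odd, while the $k$-contribution of this summand is $q_\lambda$ in either case. Summing across all summands yields a total dimension equal to $k$ minus one unit for every complex-conjugate pair with odd $q_\lambda$, i.e.\ exactly $k-p$.

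The step I expect to be the main obstacle is establishing the bound $\lfloor q_\lambda/2\rfloor$ on the maximum isotropic $A$-invariant subspace of $V_\lambda$ when $V_\lambda$ consists of several Jordan blocks of mixed parity: a naive sum of ``first halves'' across blocks misses that two odd-sized Jordan blocks can be combined to recover one extra dimension. Making this tight requires the fine canonical form of $H$-symmetric matrices on a single generalized eigenspace (producing a Hankel-type matrix representing the form in each cyclic summand, together with explicit off-diagonal pieces between blocks), which is why the result is quoted from \cite{Ric} rather than reproved here.
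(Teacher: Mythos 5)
Your proposal follows essentially the same route as the paper, which itself quotes this result from \cite{Ric} and only later (in the ``Construction of $Y$'' section, via \Cref{T261}) spells out the block-by-block construction: canonical form for the $H$-symmetric pair, first halves of even real Jordan chains, first halves of even complex blocks, and pairing of odd-sized complex blocks to recover the extra isotropic direction. The one step you defer --- achieving $\lfloor q_\lambda/2\rfloor$ by combining two odd blocks --- is precisely what the paper makes explicit in step 3 of that construction, so your account is consistent with the paper's treatment.
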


\begin{lem}\cite[p. 56]{Ric}
\label{L56}
Let $H$ be a real symmetric matrix.  A real subspace $\mathcal{M}$ is $H$-neutral if and only if $\langle Hx, y \rangle = 0$ for all $x,y\in \mathcal{M}$.
\end{lem}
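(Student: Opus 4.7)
The plan is to prove this by a straightforward polarization argument, which is the natural tool whenever one wants to promote a statement about a quadratic form $x \mapsto \langle Hx, x\rangle$ to the associated bilinear form $(x,y)\mapsto \langle Hx, y\rangle$.

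The ``if'' direction is immediate: if $\langle Hx, y\rangle = 0$ for all $x,y\in\mathcal{M}$, then specializing $y=x$ shows $\langle Hx, x\rangle = 0$ for all $x\in\mathcal{M}$, i.e.\ $\mathcal{M}$ is $H$-neutral.

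For the ``only if'' direction, I would first record the symmetry of the form $(x,y)\mapsto \langle Hx,y\rangle$: since $H = H^T$, we have
\[
\langle Hy, x\rangle \;=\; x^T H y \;=\; (x^T H y)^T \;=\; y^T H^T x \;=\; y^T H x \;=\; \langle Hx, y\rangle.
\]
Now fix $x, y \in \mathcal{M}$. Since $\mathcal{M}$ is a subspace, $x+y \in \mathcal{M}$, and $H$-neutrality gives $\langle H(x+y), x+y\rangle = 0$. Expanding bilinearly and using the symmetry above yields
\[
0 \;=\; \langle Hx, x\rangle + \langle Hx, y\rangle + \langle Hy, x\rangle + \langle Hy, y\rangle \;=\; 2\langle Hx, y\rangle,
\]
since the diagonal terms vanish by $H$-neutrality. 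Dividing by $2$ (permissible since we work over $\mathbb{R}$) gives $\langle Hx, y\rangle = 0$, as required.

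There is no real obstacle here; the only subtlety worth noting is that the argument relies on $2$ being invertible in the scalar field, which is fine over $\mathbb{R}$, and on the symmetry of $H$ to equate $\langle Hy, x\rangle$ with $\langle Hx, y\rangle$. Both hypotheses are present in the statement, so the polarization identity closes the proof cleanly in a couple of lines.
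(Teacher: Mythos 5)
Your proof is correct and is essentially the paper's own argument: both rest on the polarization identity $\langle Hx,y\rangle = \tfrac{1}{2}\bigl(\langle H(x+y),x+y\rangle - \langle Hx,x\rangle - \langle Hy,y\rangle\bigr)$, which you derive explicitly by expanding and using the symmetry of $H$. The paper simply states this relation without spelling out the trivial "if" direction or the role of symmetry, so your write-up is just a slightly more detailed version of the same proof.
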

\begin{proof}
This follows from the relation
\begin{align*}
\langle Hx, y \rangle = \frac{1}{2}\left( \langle H(x+y),x+y \rangle - \langle Hx, x \rangle - \langle Hy, y \rangle \right).
\end{align*}
\end{proof}

\noindent 
Now we are ready for the following new result.

\begin{lem}
\label{L812}
	Let \cref{Mr} hold with $P$ and $S$ real symmetric.  Among the following statements, the implications $(iii)\implies(ii)\implies(i)$ hold.
	\begin{enumerate}[label=(\roman*)]
		\item There exists an $n$-dimensional $M_r$-invariant $H_r$-neutral subspace.
		\item There exists an $n$-dimensional $M_r$-invariant $\hat{H}_r$-neutral subspace.
		\item All real eigenvalues of $M_r$ have even partial multiplicities and all non-real eigenvalues of $M_r$ have even algebraic multiplicity.
	\end{enumerate}
	If in addition $M_r$ is invertible, then we also have $(i)\implies(ii)$.
\end{lem}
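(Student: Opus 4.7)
The plan is to handle $(iii)\implies(ii)$ by direct application of \Cref{A_invariant}, and to derive the remaining implications from the simple algebraic identity $H_r = \hat H_r M_r$, which I would verify by multiplying the block forms in \cref{Mr}.

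For $(iii)\implies(ii)$, I would apply \Cref{A_invariant} with $A = M_r$ and $H = \hat H_r$. The matrix $\hat H_r$ has eigenvalues $\pm 1$ each with algebraic multiplicity $n$, so $k = n$. Hypothesis $(iii)$ guarantees that the real eigenvalues of $M_r$ all have even partial multiplicities (the standing hypothesis of \Cref{A_invariant}) and that every non-real eigenvalue of $M_r$ has even algebraic multiplicity; in particular, no non-real eigenvalue has odd algebraic multiplicity, so $p = 0$. The theorem then produces an $M_r$-invariant $\hat H_r$-neutral subspace of dimension $k - p = n$, giving $(ii)$.

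For $(ii)\implies(i)$, let $\mathcal M$ be an $n$-dimensional $M_r$-invariant $\hat H_r$-neutral subspace. I claim the same $\mathcal M$ is $H_r$-neutral. By \Cref{L56} it suffices to show $\langle H_r x, y\rangle = 0$ for all $x,y \in \mathcal M$. Using $H_r = \hat H_r M_r$, the fact that $M_r x \in \mathcal M$, and $\hat H_r$-neutrality in the bilinear form of \Cref{L56}, I compute $\langle H_r x, y\rangle = \langle \hat H_r(M_r x), y\rangle = 0$. Thus the very same subspace witnesses $(i)$. For the converse $(i)\implies(ii)$ under invertibility of $M_r$, the restriction $M_r|_{\mathcal M}\colon \mathcal M \to \mathcal M$ is injective, hence bijective on the finite-dimensional space $\mathcal M$, so $M_r^{-1}\mathcal M = \mathcal M$. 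For $x,y\in\mathcal M$, write $x = M_r(M_r^{-1}x)$ with $M_r^{-1}x\in\mathcal M$ and compute $\langle \hat H_r x, y\rangle = \langle H_r(M_r^{-1}x), y\rangle = 0$ using $H_r$-neutrality and \Cref{L56}.

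There is no serious obstacle once the identity $H_r = \hat H_r M_r$ is noticed; the proof then reduces to a one-line bilinear-form computation in each direction. The only subtlety worth flagging is that the reverse direction $(i)\implies(ii)$ genuinely requires $M_r^{-1}$ to preserve $\mathcal M$, which is exactly why the extra invertibility hypothesis is needed there and not in the forward direction.
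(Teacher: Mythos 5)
Your proof is correct and follows essentially the same route as the paper: $(iii)\implies(ii)$ via \Cref{A_invariant} with $k=n$, $p=0$, and the other two implications from the identity $H_r=\hat H_r M_r$ combined with \Cref{L56}. The only difference is that you spell out the bilinear-form computations and the fact that $M_r^{-1}\mathcal M=\mathcal M$, which the paper leaves implicit; this is a welcome, not a divergent, addition.
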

\begin{proof}
	Assume (iii).  By \Cref{A_invariant}, there is an $n$-dimensional $M_r$-invariant $\hat{H}_r$-neutral subspace (since $p = 0$ and $\hat{H}_r$ has $k = n$ positive eigenvalues).  Thus $(iii)\implies (ii)$.  Next, since $H_r = \hat{H_r}M_r$, it follows by \Cref{L56} that $(ii) \implies (i)$.  Finally, if $M_r$ is invertible, $\hat{H_r} = H_rM_r^{-1}$ and thus again by \Cref{L56}, $(i) \implies (ii)$.
\end{proof}

\noindent We need one more result before the main theorem of this section.  For this result, we first recall a definition (see, e.g., \cite[Section 4.1]{Ric}).
\begin{defn}
\rm
Let $A$ be a real $n\times n$ matrix and $B$ be a real $n\times m$ matrix. The pair $(A,B)$ is said to be \textit{controllable} if
$$\rank \begin{bmatrix} B & AB & A^2B & \cdots & A^{n-1}B\end{bmatrix} = n.$$
\end{defn}

\noindent 
For an $n\times n$ real symmetric matrix $S$, we use the notation $S>0$ to mean $S$ is positive definite and the notation $S\ge0$ to mean $S$ is positive semidefinite.
\begin{lem}
 \label{L722}
Let \cref{Mr} hold with $P$ and $S$ real symmetric.  Assume that $S\ge 0$ and the pair $(R,S)$ is controllable.  Let $\mathcal{L}$ be an $n$-dimensional $M_r$-invariant $H_r$-nonnegative subspace of $\RR^{2n}$.  Then $\mathcal{L}$ is a graph subspace, i.e.
$$\mathcal{L} = \Image \begin{bmatrix} I \\ X \end{bmatrix}$$
for some real $n\times n$ matrix $X$.
\end{lem}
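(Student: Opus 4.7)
The plan is to show that the coordinate projection $\pi : \mathcal{L} \to \RR^n$ onto the first $n$ coordinates is injective; since $\dim \mathcal{L} = n$, it will then be a bijection, and defining $X$ by $\begin{bmatrix} x \\ Xx \end{bmatrix} = \pi^{-1}(x)$ yields the desired graph representation. Equivalently, it suffices to show that $\begin{bmatrix} 0 \\ y \end{bmatrix} \in \mathcal{L}$ forces $y = 0$.

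So suppose $v_0 := \begin{bmatrix} 0 \\ y \end{bmatrix} \in \mathcal{L}$. The first key calculation is
\[
0 \le \left\langle H_r v_0, v_0 \right\rangle = \begin{bmatrix} 0 & y^T \end{bmatrix} \begin{bmatrix} P & R^T \\ R & -S \end{bmatrix} \begin{bmatrix} 0 \\ y \end{bmatrix} = -y^T S y,
\]
so that $y^T S y \le 0$; combined with $S \ge 0$ this forces $S y = 0$ (writing $S = T^T T$ gives $\|T y\|^2 = 0$). Hence $M_r v_0 = \begin{bmatrix} -Sy \\ R^T y \end{bmatrix} = \begin{bmatrix} 0 \\ R^T y \end{bmatrix}$ lies in $\mathcal{L}$ by $M_r$-invariance, and the same argument applied to this vector gives $S R^T y = 0$.

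The next step is a routine induction: if $S(R^T)^j y = 0$ for $0 \le j \le k-1$, then $M_r^k v_0 = \begin{bmatrix} 0 \\ (R^T)^k y \end{bmatrix} \in \mathcal{L}$, and the $H_r$-nonnegativity estimate above applied to this vector yields $S(R^T)^k y = 0$. Taking transposes, $y^T R^k S = 0$ for every $k \ge 0$, so
\[
y^T \begin{bmatrix} S & RS & R^2 S & \cdots & R^{n-1} S \end{bmatrix} = 0.
\]
Controllability of $(R,S)$ says this matrix has rank $n$, forcing $y = 0$ and completing the argument.

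I expect no real obstacle here: the computation is short, and the only subtle point is packaging the inductive use of $M_r$-invariance so that at each step the top block of $M_r^k v_0$ vanishes before one applies the $H_r$-nonnegativity inequality again. The hypothesis $S \ge 0$ is used precisely to pass from $y^T S y \le 0$ to $S y = 0$, and controllability is used exactly once at the end to conclude $y = 0$; both hypotheses are consumed in their natural places.
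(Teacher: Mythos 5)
Your proof is correct and takes essentially the same route as the paper's: showing the projection of $\mathcal{L}$ onto the first $n$ coordinates is injective by an induction driven by $H_r$-nonnegativity, $S\ge 0$, and $M_r$-invariance, finished off by controllability of $(R,S)$. The one small stylistic difference is that the paper writes $\mathcal{L}=\Image\begin{bmatrix}X_1\\X_2\end{bmatrix}$, invokes $H_r$-nonnegativity only once to get $X_2\ker X_1\subset\ker S$, and then obtains $(R^T)^rX_2\ker X_1\subset\ker S$ by a purely algebraic induction through the auxiliary matrix $T$ from $M_r$-invariance, whereas you re-apply the $H_r$-nonnegativity inequality at each inductive step to kill the top block of $M_r^kv_0$; both are valid and the underlying mechanism is identical.
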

\begin{proof}
For $\mathcal{L}$ as defined in the statement, write
$$\mathcal{L} = \Image \begin{bmatrix} X_1 \\ X_2 \end{bmatrix}$$
for some real $n\times n$ matrices $X_1$ and $X_2$.  We shall show that $X_1$ is invertible.  First, since $\mathcal{L}$ is $M_r$-invariant,
$$\begin{bmatrix} R & -S \\ P & R^T\end{bmatrix}
	 \begin{bmatrix} X_1 \\ X_2 \end{bmatrix} =  \begin{bmatrix} X_1 \\ X_2 \end{bmatrix}T$$
for some $n\times n$ matrix $T$.  Thus,
\begin{align}
	RX_1 - SX_2 & = X_1T , \label{RS} \\
	PX_1 + R^T X_2 & = X_2T . \label{PRT}
\end{align}
Next, since $\mathcal{L}$ is $H_r$-nonnegative, we know
\begin{equation}\label{PSD}
 \begin{bmatrix} X_1^T & X_2^T \end{bmatrix}\begin{bmatrix} P & R^T \\ R & -S \end{bmatrix}\begin{bmatrix} X_1 \\ X_2 \end{bmatrix} = X_1^TPX_1+X_1^TR^TX_2+X_2^TRX_1-X_2^TSX_2
 \ge 0 .\end{equation}
Let $\mathcal{K} = \ker X_1$.  By \cref{PSD}, for every $x\in\mathcal{K}$,
$$0\le x^TX_1^TPX_1x+x^TX_1^TR^TX_2x+x^TX_2^TRX_1x-x^TX_2^TSX_2x = -x^TX_2^TSX_2x.$$
Since $S\ge0$, $X_2x\in\ker S$, so 
$$X_2\mathcal{K}\subset\ker S.$$
Then, \cref{RS} implies
$$T\mathcal{K}\subset\mathcal{K}.$$
Consequently, \cref{PRT} gives
$$R^TX_2\mathcal{K}\subset X_2\mathcal{K}.$$
All together, we have
$$R^TX_2\mathcal{K}\subset \ker S.$$
By induction, we get
$$(R^T)^rX_2\mathcal{K}\subset \ker S, \qquad r = 0,1,2,\dots$$
Now for every $x\in \mathcal{K}$,
$$\begin{bmatrix} S \\ SR^T \\ \vdots \\ S(R^T)^{n-1}\end{bmatrix} (X_2x) = 0.$$
Since $(R,S)$ is controllable, we must have
$X_2x = 0$.  The only $n$-dimensional vector $x$ for which $X_1x=X_2x=0$ is the zero vector (otherwise $\dim\mathcal{L}<n$).  Thus $\mathcal{K} = \{0\}$ and $X_1$ is invertible.  Hence
$$\mathcal{L} =\Image \begin{bmatrix}  I\\ X \end{bmatrix},$$
where $X = X_2X_1^{-1}$.  Thus $\mathcal{L}$ is a graph subspace.
\end{proof}

\noindent Now we put everything together to get necessary and sufficient conditions for the existence of a real skew-symmetric solution of \cref{RIC}.

\begin{thm}\label{T8110}
Let \cref{Mr} hold with $P$ and $S$ real symmetric.  Assume that $S\ge0$ and the pair $(R,S)$ is controllable.  Then the following are equivalent.
\begin{enumerate}[label=(\roman*)]
	\item \Cref{RIC} has a real skew-symmetric solution.
	\item There exists an $n$-dimensional $M_r$-invariant $\hat{H_r}$-neutral subspace.
\end{enumerate}
\end{thm}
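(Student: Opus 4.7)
The proof plan is to assemble the previously established propositions and lemmas, so the argument should be short and mostly organizational.

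For the direction $(i) \implies (ii)$, suppose $X$ is a real skew-symmetric solution of \cref{RIC}. Then \Cref{P711} immediately says that the graph subspace $G(X)$ is $M_r$-invariant, and the first part of \Cref{P811} says that $G(X)$ is $\hat{H}_r$-neutral. Since $G(X)$ is manifestly $n$-dimensional, it provides the subspace required by $(ii)$. This direction uses neither the hypothesis $S\ge 0$ nor the controllability of $(R,S)$.

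For the direction $(ii) \implies (i)$, let $\mathcal{L}$ be an $n$-dimensional $M_r$-invariant $\hat{H}_r$-neutral subspace. The key step is to upgrade the $\hat{H}_r$-neutrality to a hypothesis strong enough to apply \Cref{L722}, which will force $\mathcal{L}$ to be a graph subspace. For this I would exploit the identity $H_r = \hat{H}_r M_r$: for any $x,y \in \mathcal{L}$, $M_r x \in \mathcal{L}$ by invariance, so that by \Cref{L56} applied to $\hat{H}_r$,
\[
\langle H_r x, y\rangle = \langle \hat{H}_r M_r x, y \rangle = 0.
\]
Invoking \Cref{L56} in the other direction, this shows that $\mathcal{L}$ is $H_r$-neutral, and in particular $H_r$-nonnegative.

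Now the assumptions $S\ge 0$ and $(R,S)$ controllable come into play: \Cref{L722} applies and produces a real $n\times n$ matrix $X$ with $\mathcal{L} = G(X)$. Feeding $G(X)$ back into the earlier results, \Cref{P711} gives that $X$ solves \cref{RIC}, while the first part of \Cref{P811} gives that $X$ is skew-symmetric, completing $(ii)\implies(i)$. The only non-routine step is recognizing that $M_r$-invariance converts $\hat{H}_r$-neutrality into $H_r$-neutrality (so that \Cref{L722} becomes available); once that observation is in hand, the theorem is essentially a bookkeeping consequence of \Cref{P711}, \Cref{P811}, \Cref{L56}, and \Cref{L722}.
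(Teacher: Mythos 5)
Your proof is correct and follows essentially the same structure as the paper's: $(i)\implies(ii)$ via \Cref{P711} and \Cref{P811}, and $(ii)\implies(i)$ by converting $\hat{H}_r$-neutrality to $H_r$-neutrality through $H_r=\hat{H}_rM_r$, invoking \Cref{L722} to obtain a graph subspace, and feeding back into \Cref{P711} and \Cref{P811}. The paper compresses the neutrality-conversion step into a single sentence, whereas you spell it out via \Cref{L56}; the underlying argument is identical.
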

\begin{proof}
By \Cref{P711} and \Cref{P811}, we know that if $X$ is a real skew-symmetric solution to \cref{RIC}, then $G(X)$ is an $n$-dimensional $M_r$-invariant $\hat{H}_r$-neutral subspace.  Thus $(i)\implies(ii)$.  Finally, assume there exists an $n$-dimensional $M_r$-invariant $\hat{H}_r$-neutral subspace, say $\mathcal{L}$.  Since $H_r = \hat{H}_rM_r$, $\mathcal{L}$  is also an $n$-dimensional $M_r$-invariant $H_r$-neutral subspace.  Clearly $\mathcal{L}$ is an $H_r$-nonnegative subspace, so by \Cref{L722}, $\mathcal{L}$ is a graph subspace.  Since $\mathcal{L} = G(X) $ is $M_r$-invariant, $X$ is a real solution of \cref{RIC} by \Cref{P711}.  Since $\mathcal{L} = G(X) $ is also $\hat{H}_r$-neutral, by \Cref{P811}, $X$ is skew-symmetric. Thus $(ii)\implies(i)$.
\end{proof}

\section{Matrix Polynomials}\label{poly}

Building toward our goal of factorizing a real symmetric positive semidefinite matrix polynomial, we next state a few relevant results on matrix polynomials (see \cite{Polynomials}).  
\begin{defn}
\rm
	For $n\times n$ matrices $P_i$, we define an \textit{$n\times n$ matrix polynomial $P(x)$ of degree $m$} by
	$$P(x) = \sum_{i=0}^m P_i x^i.$$
	\begin{itemize}
		\item The matrix polynomial is called \textit{real} if all $P_i$ are real matrices.
		\item The matrix polynomial is called \textit{monic} if $P_m = I_n$.  
		\item The matrix polynomial is called \textit{self-adjoint} if $P_i = P_i^*$, its conjugate transpose, for all $i$.
		\item The matrix polynomial is called \textit{symmetric} if $P_i = P_i^T$ for all $i$.
		\item The matrix polynomial is called \textit{positive semidefinite (also nonnegative)} if for all $x\in\RR$, $P(x)$ is positive semidefinite.
		\item The matrix polynomial is called \textit{regular} if $\det(P(x))$ is not identically zero.
	\end{itemize}
\end{defn}

Following \cite{Review}, where the spectrum and Jordan canonical form of a quadratic matrix polynomial were defined, we define these concepts for any degree matrix polynomial in the following ways.\footnote{Note that the partial multiplicities of an eigenvalue of a matrix polynomial are often defined as powers of the elementary divisors; however, these partial multiplicities are the same as the sizes of the Jordan blocks of our companion matrix. See the Appendix of \cite{Polynomials} for a more in depth understanding of matrix polynomial equivalences, linearizations, partial multiplicities and elementary divisors.}

\begin{defn}
\rm
Let $P(x)$ be a regular matrix polynomial.  Then the set of \textit{eigenvalues} of $P$, i.e. the \textit{spectrum}, is
$$\sigma(P):=\{x\in\CC: \det(P(x)) = 0\}.$$
\end{defn}
\begin{defn}
\rm
Let $P(x) = \sum_{i=0}^{m}P_ix^i$ be a degree $m$ matrix polynomial with $P_m$ nonsingular.
  The \textit{Jordan canonical form} for $P(x)$ is defined to be that of the companion matrix
$$C_P:=\begin{bmatrix}
			0 & I_n & 0 & \cdots & 0 \\
			0 & 0 & I_n & \cdots & 0 \\
			\vdots & \vdots & \ddots & \ddots & \vdots \\
			0 & 0 & \cdots & 0 & I_n \\
			-P_m^{-1}P_0 & -P_m^{-1}P_1 & \cdots & -P_m^{-1}P_{m-2} & -P_m^{-1}P_{m-1} 
			\end{bmatrix}.$$
\end{defn}

\begin{thm}\cite[part of Theorem 12.8]{Polynomials} \label{multiplicities} 
For a monic self-adjoint matrix polynomial $P(x)$, the following statements are equivalent.
\begin{enumerate}[label=(\roman*)]
	\item $P(x)$ is nonnegative.
	\item The partial multiplicities of $P(x)$ for real points of the spectrum are all even.
\end{enumerate}
\end{thm}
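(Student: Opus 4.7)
The approach I would take exploits the fact that, since $P$ is self-adjoint, $P(x)$ is a real symmetric matrix for every real $x$. By Rellich's real-analytic perturbation theorem, in a neighborhood of each real point $x_0$ there exist real analytic scalar functions $\mu_1(x),\ldots,\mu_n(x)$ and a real analytic orthogonal matrix function $U(x)$ such that
\[
P(x) = U(x)\,\diag(\mu_1(x),\ldots,\mu_n(x))\,U(x)^T.
\]
The problem then reduces to a scalar statement about each eigenvalue curve $\mu_i$, namely: $\mu_i \ge 0$ on $\RR$ if and only if every real zero of $\mu_i$ has even order.

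The crucial technical step is to identify the partial multiplicities of $P$ at a real eigenvalue $x_0$ with the multi-set $\{\mathrm{ord}_{x_0}(\mu_i)\}_{i=1}^n$ of orders of vanishing of these Rellich eigenvalue curves. This follows because the local Smith normal form of $P(\lambda)$ in the discrete valuation ring of power series convergent at $\lambda=x_0$ is unchanged when $P$ is multiplied on either side by analytic units; the orthogonal factors $U(x)$ and $U(x)^T$ are such units. Hence the elementary divisors of $P$ at $x_0$ coincide with those of $\diag(\mu_1,\ldots,\mu_n)$, and these are precisely the powers $(\lambda-x_0)^{\mathrm{ord}_{x_0}(\mu_i)}$.

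Granting this identification, the implication (i) $\Rightarrow$ (ii) is immediate: a real analytic function that is pointwise nonnegative on $\RR$ can only have zeros of even order, so every partial multiplicity at every real eigenvalue is even. For (ii) $\Rightarrow$ (i), even orders of vanishing force each $\mu_i$ to be sign-constant on $\RR \setminus \sigma(P)$. Since $P$ is monic of degree $m$, the relation $P(x)/x^m \to I$ as $|x|\to\infty$ implies $\mu_i(x)/x^m \to 1$, so each $\mu_i(x)$ is strictly positive for all sufficiently large $x$; combined with the no-sign-change property, this yields $\mu_i(x) \ge 0$ throughout $\RR$ (and incidentally forces $m$ to be even, since odd $m$ would require sign change at $-\infty$, contradicting (ii) via some real zero of odd order). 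Hence $P(x) \succeq 0$ for every real $x$.

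The main obstacle is the technical step equating partial multiplicities (Jordan block sizes of the companion matrix $C_P$) with the orders of vanishing of the Rellich eigenvalue curves. While intuitively natural, it requires either a careful local Smith form argument exploiting the analyticity of the diagonalizing orthogonal factor, or an equivalent computation with Jordan chains of $P$ at $x_0$. Once that identification is in place, the remainder is a clean sign analysis of real analytic scalar functions, and the only remaining subtlety is the asymptotic argument as $|x|\to\infty$, which is handled by the monic normalization.
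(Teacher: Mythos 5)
The paper offers no proof of this statement---it is quoted verbatim from Gohberg, Lancaster and Rodman---so there is no in-text argument to compare against; your job is therefore to supply a self-contained proof, and what you propose is essentially sound. Indeed, your route through analytic eigenvalue branches is close in spirit to how the cited source itself analyzes nonnegativity (there the vanishing orders of the branches $\mu_i$ give the partial multiplicities at real eigenvalues and their leading signs give the sign characteristic). Your key identification is correctly justified: $U$ is a unit over the ring of analytic germs at $x_0$, the local Smith form of $\diag(\mu_1,\dots,\mu_n)$ over that ring is $\diag\bigl((x-x_0)^{d_1},\dots,(x-x_0)^{d_n}\bigr)$ with $d_i$ the vanishing orders, and the analytic and polynomial local Smith forms agree because both are read off from the orders of vanishing at $x_0$ of the determinantal divisors (gcd's of $k\times k$ minors), which are the same polynomials in either setting. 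Regularity (monicity forces $\det P\not\equiv 0$) guarantees no branch vanishes identically, so all orders are finite.

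Two points need repair. First, ``self-adjoint'' here means $P_i=P_i^*$ with complex Hermitian coefficients, so $P(x)$ is Hermitian rather than real symmetric for real $x$; Rellich's theorem still applies, but with an analytic unitary $U(x)$ and $U(x)^*$ in place of an orthogonal $U(x)$ and $U(x)^T$. Second, and more substantively, in the direction (ii)~$\Rightarrow$~(i) you must invoke the \emph{global} form of Rellich's theorem---analytic branches $\mu_1,\dots,\mu_n$ defined on all of $\RR$---whereas you only state the local form. Purely local information cannot close this direction: a branch behaving like $-(x-x_0)^2$ near $x_0$ has even vanishing order, so condition (ii) is locally consistent with $P$ failing to be positive semidefinite near $x_0$. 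The conclusion requires propagating the positivity of each branch for large $x$ (coming from monicity) backward through all of $\RR$, using that a globally defined, not identically zero analytic function whose real zeros (finitely many, since they lie among the roots of $\det P$) all have even order cannot change sign. With the global version of Rellich, which is true and standard for one-parameter analytic Hermitian families, this step and your parenthetical observation that (ii) forces $m$ to be even both go through, and the proof is complete.
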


In the previous section, we found that if all real eigenvalues of $M_r$ have even partial multiplicities and all non-real eigenvalues of $M_r$ have even algebraic multiplicity, then our \cref{RIC} has the desired real skew-symmetric solution.  
Next, using the notion of linearization, we will associate the matrix $M_r$ with a monic non-negative matrix polynomial.  For this we begin with a few definitions from \cite[Section 1.1]{Polynomials}.

\begin{defn}
\rm
Two matrix polynomials $M_1(x)$ and $M_2(x)$ of size $n \times n$ are called \textit{equivalent} (notated $M_1(x)\sim M_2(x)$) if 
$$M_1(x) = E(x)M_2(x)F(x)$$
for some $n \times n$ matrix polynomials $E(x)$ and $F(x)$ with constant nonzero determinants.
\end{defn}

\begin{defn}
\rm
Let $P(x)$ be an $n\times n$ monic matrix polynomial of degree $m$.  A linear matrix polynomial $x I_{nm} - A$ is called a \textit{linearization} of $P(x)$ if 
$$
x I_{nm} - A \sim \begin{bmatrix} P(x) & 0 \\ 0 & I_{n(m-1)} \end{bmatrix}.
$$
\end{defn}

\noindent
Note that $x I - C_P$ is a linearization of $P(x)$.  For any linearization $x I - A$, the partial multiplicities in every eigenvalue of $A$ and $P(x)$ are the same \cite[Section 1.1]{Polynomials}.  

\begin{lem}\label{Mr_lem}
Let $Q(x)= \sum_{j=0}^{2m}Q_jx^j$ be an $n\times n$ real symmetric matrix polynomial of degree $2m$ with $Q_0=I_n$.
If $m=1$, set
$$M_r := \begin{bmatrix}
			-\frac{1}{2}Q_1 & -I_n \\[5pt]
			Q_2-\frac{1}{4}Q_1^2 & -\frac{1}{2}Q_1
		\end{bmatrix}.$$
Otherwise, define the $2nm\times 2nm$ matrix as
$$M_r :=\scalemath{0.8}{
	\left[\begin{array}{ccccc|ccccc}
		-\frac{1}{2}Q_1 &&&&&		-I_n &&& \\[5pt]
		I_n & 0 &&&&			 &0  &&& \\
		&I_n & 0 &&&			& & 0 && \\
		&& \ddots & \ddots &	&	&& & \ddots & \\
		&&& I_n & 0	&		&&& & 0 \\[5pt]\hline
		&&&&&&&&&\\
		Q_2-\frac{1}{4}Q_1^2 & \frac{1}{2}Q_3 & & &&		-\frac{1}{2}Q_1 &I_n&& \\[5pt]
		\frac{1}{2}Q_3 & Q_4 & \frac{1}{2}Q_5 & &&				 & 0 &I_n& \\[5pt]
		&\frac{1}{2}Q_5 & Q_6 & \ddots 	&		&&  & 0 & \ddots& \\[5pt]
		&&\ddots&\ddots & 		\frac{1}{2}Q_{2m-1} & &&&\ddots&I_n \\[5pt]
		&&& \frac{1}{2}Q_{2m-1} & Q_{2m}&				&&& & 0
	\end{array}\right]}.$$
Then for
$$\rev Q(x) := \sum_{i=0}^{2m} Q_{2m-i} x^{i} = x^{2m}I_n +\sum_{i=0}^{2m-1} Q_{2m-i} x^{i} ,$$
we have
$$ x I_{2nm} - M_r \sim \begin{bmatrix} \rev Q(x) & 0 \\ 0 & I_{2nm-n} \end{bmatrix}.$$
\end{lem}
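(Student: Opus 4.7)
The plan is to exhibit a sequence of elementary block row and column operations that reduces $xI_{2nm}-M_r$ directly to $\begin{bmatrix}\rev Q(x)&0\\0&I_{2nm-n}\end{bmatrix}$. Each such operation corresponds to pre- or post-multiplication by a polynomial matrix with constant nonzero determinant (in fact $\pm 1$), so the resulting transformation matrices $E(x)$ and $F(x)$ are unimodular, as required by the definition of equivalence. The key structural observation I exploit is that $M_r$ contains exactly $2m-1$ off-diagonal $\pm I_n$ blocks: a $-I_n$ at position $(1,m+1)$, an $I_n$ at each $(i+1,i)$ for $i=1,\ldots,m-1$ in the upper-left block, and an $I_n$ at each $(m+i,m+i+1)$ for $i=1,\ldots,m-1$ in the lower-right block. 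In $xI_{2nm}-M_r$ these appear as $\pm I_n$ and will serve as pivots.

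I process the pivots in a specific order. The first is the $I_n$ at position $(1,m+1)$. Applying the row operation $R_{m+1}\to R_{m+1}-(xI+\frac{1}{2}Q_1)R_1$ together with the column operation $C_1\to C_1-(xI+\frac{1}{2}Q_1)C_{m+1}$, and using the completion-of-squares identity
$$-(Q_2-\tfrac{1}{4}Q_1^2)-(xI+\tfrac{1}{2}Q_1)^2 = -(x^2I+xQ_1+Q_2),$$
I turn the $(m+1,1)$ entry into $-(x^2I+xQ_1+Q_2)$, while block row $1$ and block column $m+1$ are left containing only the pivot $I_n$. Next I sweep through the $m-1$ subdiagonal pivots of the upper-left block at positions $(2,1),(3,2),\ldots,(m,m-1)$ in order, using each to clear its block row and block column. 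Finally I sweep through the $m-1$ superdiagonal pivots of the lower-right block at positions $(m+1,m+2),(m+2,m+3),\ldots,(2m-1,2m)$, again in order. After all $2m-1$ pivots have been processed, the only surviving non-pivot block sits at position $(2m,m)$, and its value is $-\rev Q(x)$. A block permutation with determinant $\pm 1$ together with sign flips then yields the asserted block-diagonal form.

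The main obstacle is verifying that the accumulated $(2m,m)$ entry really equals $-\rev Q(x)$ at the end. To organize this bookkeeping, I plan to track the distinguished active entry step by step and prove inductively that after processing the first $k$ sweep pivots beyond the initial one, it equals a polynomial in $x$ whose final value, after all $2m-2$ sweep pivots, is $-\rev Q(x)=-(x^{2m}I+Q_1x^{2m-1}+\cdots+Q_{2m})$. The case $m=1$ reduces to the single completion-of-squares calculation displayed above, and a direct hand computation for $m=2$ gives a useful sanity check as well as a template for the general induction.
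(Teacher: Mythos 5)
Your plan is correct and is essentially the paper's own argument: the paper likewise reduces $xI-M_r$ by unimodular transformations pivoting on the $\pm I_n$ blocks, merely packaging your sequential eliminations into a permutation followed by one explicit multiplier $V(x)$ whose entries $W,V_1,\dots,V_{m-2}$ encode the Horner-style accumulation you describe. The bookkeeping induction you defer is exactly what the paper discharges via the single displayed identity $-Q_{2m}-\tfrac12 xQ_{2m-1}+x\bigl(x^{m-2}W+\sum_{i=1}^{m-2}x^{m-2-i}V_i\bigr)=-\rev Q(x)$, so carrying it out would complete your proof along the same lines.
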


\begin{proof}
If $m=1$,
$$\begin{bmatrix}
	 I_n & -xI_n-\frac{1}{2}Q_1 \\[5pt]
	0 & I_n  
	\end{bmatrix}
	\begin{bmatrix} 
		0 & I_n \\
		I_n & 0
	\end{bmatrix} (xI_{2n}-M_r)
	= 
	\begin{bmatrix}
		-Q_2-xQ_1-x^2I_n & 0 \\[5pt]
		xI_n+\frac{1}{2}Q_1 & I_n
		\end{bmatrix}.$$
It is clear then
$$ x I_{2n} - M_r \sim \begin{bmatrix} \rev Q(x) & 0 \\ 0 & I_{n} \end{bmatrix}.$$
Now assume $m\ge2$.  We begin by defining $M_1(x)$ as a permutation of the rows and columns  of $x I - M_r$.
\begin{align*}
M_1(x)&\scalemath{0.8}{:= \left[\begin{array}{cccc|ccccc}
		 &&&0&				0 &\cdots&0& I_n \\[5pt]
		 & &0&I_n&			 \vdots &  &0&0 \\
		& \iddots & \iddots &	&	& \iddots&  &\vdots \\
		0&I_n&& & 			0&&\cdots & 0 \\[5pt]\hline
		&&&&&&&&\\
		I_n & 0 & \cdots&0 &					0 &&& \\[5pt]
		0 & 0 &  & \vdots &				I_n & 0 & & \\[5pt]
		\vdots& & \ddots &0	&		&  \ddots& \ddots & \ \\[5pt]
		0&\cdots&  & 0&				&& I_n& 0
	\end{array}\right]
	(x I - M_r )
	\left[\begin{array}{c|c}
			\begin{matrix}
				 & & I_n \\
				& \iddots & \\
				I_n & & 
			\end{matrix}
			& \bigzero \\[5pt] \hline
			&\\
			\bigzero &
			\begin{matrix}
				I_n & & \\
				& \ddots & \\
				& & I_n
			\end{matrix}
	\end{array}\right]} \\[10pt]
	& = 
	\scalemath{0.8}{\left[\begin{array}{ccccc|ccccc}
		-Q_{2m}&-\frac{1}{2}Q_{2m-1}&0&  & &				
											&& & x I_n \\[5pt]
		x I_n & - I_n &&&&			 
											&  &0& \\
		& \ddots & \ddots &	&&	
											&  \iddots&& \\
		&& \ddots & \ddots &	&	
											& & & \\
		&&& x I_n & - I_n	&		
											0&& &  \\[5pt]\hline
		&&&&&&&&&\\
		 &&&&x I_n+\frac{1}{2}Q_1&		
											I_n &&& \\[5pt]
		 &  &&-\frac{1}{2}Q_3&\frac{1}{4}Q_1^2-Q_2&		
											x I_n+\frac{1}{2}Q_1 & -I_n&& \\[5pt]
		 & & \iddots &-Q_4&-\frac{1}{2}Q_3&			 
											& x I_n &\ddots& \\[5pt]
		&\iddots & \iddots & \iddots&	&	
											&  & \ddots & \ddots \\[5pt]
		-\frac{1}{2}Q_{2m-1}& -Q_{2m-2}&-\frac{1}{2}Q_{2m-3} & & 	&		
											&  &  & x I_n & -I_n \\[5pt]
	\end{array}\right].}
\end{align*}
Clearly, 
$$x I - M_r \sim M_1(x).$$
Next, define $W,V_1,\dots,V_{m-2}$ by
\begin{align*}
		 W & = -\frac{1}{2}x^{m-2}Q_3 - x^{m-1}Q_2 - x^mQ_1 -x^{m+1}I_n \\[5pt]
		 V_1 & = -\frac{1}{2}x^{m-3}Q_5-x^{m-2}Q_4-\frac{1}{2}x^{m-1}Q_3 \\[5pt]
		  V_2 & = -\frac{1}{2}x^{m-4}Q_7-x^{m-3}Q_6-\frac{1}{2}x^{m-2}Q_5 \\[5pt]
		 	&\vdots \\
		V_{m-3} & = -\frac{1}{2}x Q_{2m-3}-x^2Q_{2m-4} - \frac{1}{2}x^3Q_{2m-5} \\[5pt]
		V_{m-2} &= -\frac{1}{2}Q_{2m-1} - x Q_{2m-2} - \frac{1}{2}x^2 Q_{2m-3}.
\end{align*}
Then set
$$V(x) : = \scalemath{0.68}{\left[\begin{array}{c|c|c}
		I_n & \begin{matrix}
				x^{m-2}W+ \sum_{i=1}^{m-2} x^{m-2-i}V_{i}
				& x^{m-3}W+\sum_{i=1}^{m-3} x^{m-3-i}V_{i}  
				&\cdots
				&xW + V_{1} 
				& W
				\end{matrix}
			& \begin{matrix}
	 		-x^{m-1}(x I_n+\frac{1}{2}Q_1)
				&x^{m-1}I_n
				&\cdots
				&x^2 I_n
				&x I_n \\
			\end{matrix} \\[5pt]
			\bigzero & \bigI_{n(m-1)} & \bigzero \\[5pt]\hline
			&& \\
		\bigzero &\bigzero& \bigI_{nm}
		\end{array}\right]}.$$
Noting that
$$-Q_{2m} - \frac{1}{2} x Q_{2m-1} + x\left(x^{m-2}W+ \sum_{i=1}^{m-2} x^{m-2-i}V_{i}\right)= -\rev Q(x),$$
we get 
$$ V(x)M_1(x) = 
	\left[  		
	\begin{array}{c|c}
		\begin{matrix}
		-\rev Q(x) &&&\\
		 & - I_n &&\bigzero&	\\
		{\Scale[2]{*}}&&  & \ddots &		\\
		&&&   & - I_n	
		\end{matrix} 
		&\bigzero \\ \hline
		{\Scale[4]{*}} & 
		\begin{matrix}	
											I_n &&& \\
											& -I_n&&\bigzero \\	 	
											& {\Scale[2]{*}} &  & \ddots \\		
											&  &  & & -I_n 
											\end{matrix}
		
	\end{array}\right].$$
Since $V(x)$ has constant nonzero determinant, $x I - M_r \sim V(x)M_1(x)$. It is evident now that 
\begin{align*}
 x I_{2nm} - M_r \sim \begin{bmatrix} \rev Q(x) & 0 \\ 0 & I_{2nm-n} \end{bmatrix}.
\end{align*}
\end{proof}

\section{Real Factorization of Non-negative Matrix Polynomial}\label{main}

We are now ready for the main result.  While the following theorem was previously proven by Hanselka and Sinn \cite{Hanselka}, we provide a new constructive proof following that of the complex analogue presented in the monograph by Bakonyi and Woerdeman \cite[Section 2.7]{Moments}.
\begin{thm}\label{sq_factor}
Let $Q(x)= \sum_{j=0}^{2m}Q_jx^j$ be an $n\times n$ real symmetric positive semidefinte matrix polynomial of degree $2m$ with $Q_0>0$.  Then the roots of $\det(Q(x))$ all have even multiplicity if and only if there exists an $n\times n$ real matrix polynomial $G(x)= \sum_{j=0}^{m}G_jx^j$
of degree $m$ such that
$$Q(x) = G(x)^TG(x).$$
\end{thm}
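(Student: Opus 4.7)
The plan is to dispose of the easy implication in one line and then assemble the harder direction from the tools of Sections \ref{ricatti} and \ref{poly}. For ($\Leftarrow$), if $Q(x) = G(x)^T G(x)$ then $\det Q(x) = (\det G(x))^2$, so every root of $\det Q(x)$ has even multiplicity. For ($\Rightarrow$), I first reduce to the case $Q_0 = I_n$: since $Q_0 > 0$, write $Q_0 = A^T A$ with $A$ invertible, and set $\tilde Q(x) := (A^T)^{-1} Q(x) A^{-1}$. Then $\tilde Q$ is real symmetric positive semidefinite with $\tilde Q_0 = I_n$, and $\det \tilde Q(x) = (\det A)^{-2} \det Q(x)$ preserves all root multiplicities; a factorization $\tilde Q = \tilde G^T \tilde G$ yields $Q = (\tilde G A)^T (\tilde G A)$.

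With $Q_0 = I_n$, I turn to $\rev Q(x) = \sum_{i=0}^{2m} Q_{2m-i} x^i$, which is monic, real symmetric, of degree $2m$. The identity $\rev Q(x) = x^{2m} Q(1/x)$ for $x \ne 0$, together with $Q_{2m} \ge 0$ (from $x^{-2m} Q(x) \to Q_{2m}$ as $|x| \to \infty$), shows that $\rev Q$ is positive semidefinite on $\RR$. By \Cref{multiplicities}, the partial multiplicities of $\rev Q$ at real eigenvalues are all even, and \Cref{Mr_lem} transfers this property to the matrix $M_r$. Next, the hypothesis that all roots of $\det Q(x)$ have even multiplicity, combined with $\det Q \ge 0$ on $\RR$ and $\det Q(0) = 1 > 0$, shows that $\det Q(x)$ is the square of a real polynomial $p(x)$; then $\det \rev Q(x) = x^{2mn} p(1/x)^2 = (x^{mn} p(1/x))^2$ is also a perfect square, so all its roots have even multiplicity. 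Since the characteristic polynomial of $M_r$ is a nonzero scalar multiple of $\det \rev Q(x)$, the non-real eigenvalues of $M_r$ have even algebraic multiplicity, and condition (iii) of \Cref{L812} holds.

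To invoke \Cref{T8110}, I still need $S \ge 0$ and $(R, S)$ controllable. Both are immediate from the block form of $M_r$ given in \Cref{Mr_lem}: $S = \diag(I_n, 0, \ldots, 0) \ge 0$, and the block-shift structure of $R$ ensures that $\Image R^k S$ covers one additional block of $\RR^{nm}$ for each $k$, so that $[S,\, RS,\, \ldots,\, R^{m-1}S]$ has rank $nm$. \Cref{T8110} then produces a real skew-symmetric solution $X$ of \cref{RIC}. From $X$ I would extract $G(x)$ following the complex Fej\'er--Riesz template in \cite[Section 2.7]{Moments}: assemble the positive semidefinite block matrix built from the $Q_k$ whose anti-diagonal sums encode the relations $\sum_{i+j=k} G_i^T G_j = Q_k$, and use $X$ in a Schur-complement / Cholesky-type argument to read off the coefficients $G_0, \ldots, G_m$. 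The skew-symmetry of $X$ is precisely the feature that forces the resulting factor $G(x)$ to be real rather than merely complex.

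The main obstacle is this final step: turning the abstract skew-symmetric Riccati solution into explicit real coefficients $G_0, \ldots, G_m$ with $\deg G \le m$ and verifying $G(x)^T G(x) = Q(x)$. The upstream reductions (to $Q_0 = I_n$, to condition (iii) of \Cref{L812}, and to the modified Riccati equation) are essentially mechanical once the machinery of Sections \ref{ricatti} and \ref{poly} is in hand, but the skew-symmetric rather than Hermitian nature of $X$ alters the Schur-complement identities used in the complex proof, and the construction must gracefully handle singular $Q_{2m}$ (where $G_m^T G_m = Q_{2m}$ still determines $G_m$ only up to a real orthogonal factor). This is precisely where the paper's constructive algorithm will need to do its real work.
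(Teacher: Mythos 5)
Your proposal follows the same route as the paper: reduce to $Q_0=I_n$, pass to $\rev Q$ and its linearization $M_r$ (\Cref{Mr_lem}), verify condition (iii) of \Cref{L812} via \Cref{multiplicities} and the even-multiplicity hypothesis, check $S=BB^T\ge 0$ and controllability, and invoke \Cref{T8110} to obtain a real skew-symmetric solution $\tilde X$ of the modified Riccati equation. The upstream steps you carry out are correct. The one step you leave open --- and flag as the main obstacle --- is actually the easiest part of the paper's argument, and the difficulties you anticipate there (altered Schur-complement identities, the orthogonal ambiguity in $G_m$ when $Q_{2m}$ is singular) do not arise. The paper runs the logic in the opposite order from your sketch: it starts from the $(m+1)n\times(m+1)n$ matrix $F_0$ with blocks $I_n,\tfrac12 Q_1,Q_2,\dots,Q_{2m}$, perturbs it to $F_X$ by adding $X$ in the upper-right $nm\times nm$ corner and $-X$ in the lower-left corner, and observes that \cref{QARE} is precisely the statement that the Schur complement of $F_X$ with respect to its $(1,1)$ block $I_n$ vanishes, with $P=\Gamma_{22}-\Gamma_{12}^T\Gamma_{12}$, $R=A-B\Gamma_{12}$, $S=BB^T$ and with the associated $M_r$ equal to the matrix of \Cref{Mr_lem}. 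Consequently, for the skew-symmetric solution $\tilde X$ one gets $F_{\tilde X}\ge 0$ of rank exactly $n$ for free; any rank-$n$ Gram factorization $F_{\tilde X}=\begin{bmatrix}G_0&\cdots&G_m\end{bmatrix}^T\begin{bmatrix}G_0&\cdots&G_m\end{bmatrix}$ then yields $Q(x)=v(x)^TF_{\tilde X}v(x)=G(x)^TG(x)$ with $v(x)=\begin{bmatrix}I_n& xI_n&\cdots& x^mI_n\end{bmatrix}^T$, because the skew perturbation contributes nothing to the anti-diagonal block sums (each block of $X$ enters once with a $+$ and once with a $-$ on the same anti-diagonal). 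To make your version airtight you must make explicit that the $P,R,S$ in the Riccati equation you solve are exactly those produced by this Schur complement; a skew-symmetric solution of a Riccati equation merely ``associated with'' $M_r$ from \Cref{Mr_lem} would not by itself deliver the positive semidefinite rank-$n$ Gram matrix from which $G$ is read off.
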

\begin{proof}
First assume $Q(x) = G(x)^TG(x).$ Then $\det(Q(x)) = \det(G(x))^2$, so clearly all roots have even multiplicity.  On the other hand, assume the roots of $\det(Q(x))$ all have even multiplicity.
Without loss of generality, assume $Q_0=I_n$ (otherwise, take $\tilde{Q}(x):=Q_0^{-1/2}Q(x)Q_0^{-1/2}$).
Consider the $(m+1)n\times (m+1)n$ real symmetric matrix
$$F_0 = \begin{bmatrix}
			I_n & \frac{1}{2}Q_1 & & \\[10pt]
			\frac{1}{2}Q_1 & Q_2 & \ddots & \\[10pt]
			&\ddots & \ddots & \frac{1}{2}Q_{2m-1} \\[10pt]
			&& \frac{1}{2}Q_{2m-1} & Q_{2m}
		\end{bmatrix}.$$
Given an $nm\times nm$ real skew-symmetric matrix $X$,
let 
$$F_X = F_0 + \begin{bmatrix} 0_{nm\times n} & X \\ 0_n & 0_{n\times nm} \end{bmatrix} - \begin{bmatrix} 0_{n\times nm} & 0_n \\ X & 0_{nm\times n}\end{bmatrix}.$$
It should be noted that in the above line, the matrix decompositions are different; e.g. the $X$ block and the $-X$ block overlap in general.  We want to solve
\begin{align*}
	X_{\text{opt}} &= \argmin \ \ \rank \left(F_X\right) 
		\quad \text{ such that } F_X\ge0.
\end{align*}
Let 
$$A = \begin{bmatrix}
		0_n &&& \\
		I_n & 0_n && \\
		& \ddots & \ddots & \\
		&& I_n & 0_n
	\end{bmatrix} \in\RR^{nm\times nm}
\qquad\text{and}\qquad
B = \begin{bmatrix}
	I_n \\
	0_n\\
	\vdots\\
	0_n
	\end{bmatrix} \in \RR^{nm\times n}.$$
Then $(A,B)$ is controllable, 
$$\begin{bmatrix}
	0_{n\times nm} & 0_n \\
	X & 0_{nm\times n}
	\end{bmatrix}
	=\begin{bmatrix}
		0_{n\times n} &0_{n\times nm} \\
		XB & XA
	\end{bmatrix},
\qquad\text{and}\qquad
\begin{bmatrix}
	0_{nm\times n} & X \\
	0_n & 0_{n\times nm}
	\end{bmatrix}
	=\begin{bmatrix}
		0_{n\times n} &B^TX \\
		0_{nm\times n} & A^TX
	\end{bmatrix}.$$
Split $F_0$ into four blocks as follows,
$$F_0 = 
\left[\begin{array}{c|c}
			I_n & \begin{matrix} \frac{1}{2}Q_1 
								&\phantom{ \frac{1}{2}Q_{2m-1}} 
								& \phantom{ \frac{1}{2}Q_{2m-1}} 
				\end{matrix} \\[5pt] \hline
			\begin{matrix}
			\frac{1}{2}Q_1 \\[10pt]
			 \phantom{\frac{1}{2}}\\[10pt]
			 \phantom{\frac{1}{2}}
			\end{matrix}
			& \begin{matrix}
			 Q_2 & \ddots & \\[10pt]
			\ddots & \ddots & \frac{1}{2}Q_{2m-1} \\[10pt]
			& \frac{1}{2}Q_{2m-1} & Q_{2m}
		\end{matrix} \end{array}\right]
		=: \begin{bmatrix}
			I_n & \Gamma_{12} \\
			\Gamma_{21} & \Gamma_{22}
		\end{bmatrix}.$$
Noting $\Gamma_{12}^T = \Gamma_{21}$ and $\Gamma_{22}$ is real symmetric, we can recast the condition
$F_X\ge0$
as
$$\begin{bmatrix}
	I_n & \Gamma_{12}+B^TX \\
	\Gamma_{12}^T-XB & \Gamma_{22}+A^TX - XA
	\end{bmatrix} \ge0 .$$
Consider the Schur complement with respect to $I_n$,
$$\Gamma_{22}+A^TX - XA - (\Gamma_{12}^T-XB)I_n^{-1}(\Gamma_{12}+B^TX).$$
Setting this equal to zero, we get the modified algebraic Riccati equation
\begin{equation}\label{QARE}
P + R^TX - XR + XSX = 0,
\end{equation}
where
\begin{align*}
	P &= \Gamma_{22} - \Gamma_{12}^T\Gamma_{12} ,\\
	R &= A - B\Gamma_{12} ,\\
	S &= BB^T.
\end{align*}
Note $P = P^T$ and $S = S^T$ with $S\ge0$.
In this case, the associated $M_r$ matrix is 
\begin{align*}
M_r &= \begin{bmatrix}
			A - B\Gamma_{12} & -BB^T \\
			\Gamma_{22}-\Gamma_{12}^T\Gamma_{12} & A^T - \Gamma_{12}^TB^T
			\end{bmatrix} \\[5pt]
	&=
	\left[\begin{array}{cccc|ccccc}
		-\frac{1}{2}Q_1 &&&&		-I_n &&& \\[5pt]
		I_n & 0 &&&			 & 0 && \\
		& \ddots & \ddots &	&	& & \ddots & \\
		&& I_n & 0	&		&& & 0 \\[5pt]\hline
		&&&&&&&&\\
		Q_2-\frac{1}{4}Q_1^2 & \frac{1}{2}Q_3 & & &		-\frac{1}{2}Q_1 &I_n&& \\[5pt]
		\frac{1}{2}Q_3 & Q_4 & \ddots & &				 & 0 &\ddots& \\[5pt]
		&\ddots & \ddots & \frac{1}{2}Q_{2m-1}	&		&  & \ddots & I_n \\[5pt]
		&& \frac{1}{2}Q_{2m-1} & Q_{2m}&				&& & 0
	\end{array}\right] .
\end{align*}
By \Cref{Mr_lem}, $x I - M_r$ is a linearization of $\rev Q(x)$.
 Since $Q(x)\ge0$ for all $x\in\RR$, $\rev Q(x) = x^{2m} Q\left(\frac{1}{x}\right) \ge0$ for all nonzero $x\in\RR$.  Thus by continuity, $\rev Q(x) \ge0$ for all $x\in\RR$.  Then by \Cref{multiplicities}, 
the partial multiplicities of every real eigenvalue of $\rev Q(x)$ are all even.
Since $x I - M_r$ is a linearization of $\rev Q(x)$, all the partial multiplicities of every eigenvalue of $M_r$ and $\rev Q(x)$ are the same, so the partial multiplicities of every real eigenvalue of $M_r$ are all even. 
Now since all roots of $\det(Q(x))$ have even multiplicity, all roots of $\det(\rev Q(x))$ have even multiplicity (note the multiplicity of zero as a root is also known to be even since $Q(x)$ has even degree).  Then since
$$\det(x I-M_r) = \det(\rev Q(x)),$$
all eigenvalues of $M_r$ have even algebraic multiplicity.  In particular, all non-real eigenvalues of $M_r$ have even algebraic multiplicity.  Hence by \Cref{L812} and \Cref{T8110}, there is a skew-symmetric solution, $\tilde{X}$ of \cref{QARE}.
Then since the Schur complement with respect to $I_n$ is zero, we know
$$F_{\tilde{X}}
	= \begin{bmatrix}
		I_n & \Gamma_{12}+B^T\tilde{X} \\
		\Gamma_{12}^T-\tilde{X}B & \Gamma_{22}+A^T\tilde{X} -\tilde{X}A
	\end{bmatrix} \ge0 ,$$
and
\begin{align*}
\rank \left(F_{\tilde{X}}\right)
	& =\rank\left(\begin{bmatrix}
				I_n & \Gamma_{12}+B^T\tilde{X} \\
				\Gamma_{12}^T-\tilde{X}B & \Gamma_{22}+A^T\tilde{X} -\tilde{X}A
			\end{bmatrix}\right)  
	= \rank I_n = n.
\end{align*}
We can factorize
$$F_{\tilde{X}}=\begin{bmatrix}
	I_n & \Gamma_{12}+B^T\tilde{X} \\
	\Gamma_{12}^T-\tilde{X}B & \Gamma_{22}+A^T\tilde{X} -\tilde{X}A
	\end{bmatrix}
	=\begin{bmatrix} G_0^T \\ \vdots \\ G_m^T \end{bmatrix}
	\begin{bmatrix} G_0 & \cdots & G_m \end{bmatrix} , $$
with $G_0=I_n$ and $G_i$ real $n\times n$ matrices for $i = 1, 2, \dots,m$.  Then,
\begin{align*}
Q(x) &= \begin{bmatrix} I_n & xI_n &\cdots &x^mI_n \end{bmatrix}
		F_{\tilde{X}}
		\begin{bmatrix} I_n \\ xI_n \\ \vdots  \\ x^mI_n \end{bmatrix} \\[5pt]
	&= \begin{bmatrix} I_n & xI_n &\cdots &x^mI_n \end{bmatrix}
		\begin{bmatrix} G_0^T \\ \vdots \\ G_m^T \end{bmatrix}
	\begin{bmatrix} G_0 & \cdots & G_m \end{bmatrix}
		\begin{bmatrix} I_n \\ xI_n \\ \vdots  \\ x^mI_n \end{bmatrix}.
\end{align*}
Thus for
$G(x)= \sum_{j=0}^{m}G_jx^j,$
$Q(x) = G(x)^TG(x).$
\end{proof}

\noindent \Cref{sq_factor} required $Q_0>0$.  We can relax this condition as follows.

\begin{cor}\label{sq_factor_cor}
Let $Q(x)= \sum_{j=0}^{2m}Q_jx^j$ be a real $n\times n$ regular symmetric positive semidefinite matrix polynomial of degree $2m$.  Then all roots of $\det(Q(x))$ have even multiplicity if and only if there exists an $n\times n$ real matrix polynomial $G(x)= \sum_{j=0}^{m}G_jx^j$
of degree $m$ such that
$$Q(x) = G(x)^TG(x).$$
\end{cor}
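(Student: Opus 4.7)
The forward direction is immediate: if $Q(x) = G(x)^T G(x)$, then $\det(Q(x)) = \det(G(x))^2$, so every root of $\det(Q(x))$ has even multiplicity. The work is in the converse.

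The plan is to reduce to \Cref{sq_factor} by a real translation of the variable. Since $Q(x)$ is regular and positive semidefinite on $\RR$, the polynomial $\det(Q(x))$ has only finitely many real roots, so we may pick $c \in \RR$ with $\det(Q(c)) \ne 0$; as $Q(c)$ is real symmetric and positive semidefinite with nonzero determinant, $Q(c) > 0$. Define the shifted polynomial
$$\tilde Q(y) := Q(y+c) = \sum_{j=0}^{2m} \tilde Q_j y^j.$$
Then $\tilde Q$ is real, symmetric, and positive semidefinite for all $y \in \RR$; its leading coefficient equals $Q_{2m}$, so $\tilde Q$ still has degree $2m$; and $\tilde Q_0 = Q(c) > 0$.

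Next I would verify that the even-multiplicity hypothesis transfers. Since $\det(\tilde Q(y)) = \det(Q(y+c))$, the roots of $\det(\tilde Q(y))$ are precisely the roots of $\det(Q(x))$ shifted by $-c$, with the same multiplicities. Hence every root of $\det(\tilde Q(y))$ has even multiplicity. Therefore \Cref{sq_factor} applies to $\tilde Q$ and yields a real matrix polynomial $\tilde G(y) = \sum_{j=0}^{m} \tilde G_j y^j$ of degree $m$ with
$$\tilde Q(y) = \tilde G(y)^T \tilde G(y).$$
Substituting $y = x - c$ and setting $G(x) := \tilde G(x - c)$, we obtain the desired factorization $Q(x) = G(x)^T G(x)$, where $G(x)$ is a real $n \times n$ matrix polynomial of degree $m$, since $\tilde G_m = G_m$ is unchanged by the shift.

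There is essentially no obstacle beyond ensuring the existence of a real $c$ with $Q(c) > 0$, which is the only place regularity is used; all other properties (real symmetry, positive semidefiniteness, degree, and even multiplicities of $\det$) transparently pass through the real affine substitution $x \mapsto x + c$.
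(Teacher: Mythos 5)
Your proof is correct and follows essentially the same route as the paper: reduce to \Cref{sq_factor} via a real affine change of variable at a point $c$ where $Q(c)>0$ (which exists by regularity), noting that symmetry, positive semidefiniteness, degree, and the even-multiplicity condition on $\det Q$ all transfer. The only cosmetic difference is that the paper uses the substitution $x\mapsto x_0-x$ rather than $x\mapsto x+c$; both work identically.
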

\begin{proof}
First, assume $Q(x) = G(x)^TG(x).$ Then $\det(Q(x)) = \det(G(x))^2$, so clearly all roots have even multiplicity.  On the other hand, assume all roots of $\det(Q(x))$ have even multiplicity. Let $x_0\in\RR$ be such that $\det(Q(x_0))\ne 0$ (note such $x_0$ exists since $Q(x)$ is regular, i.e. the determinant is not identically zero).  Consider 
$$P(x):= Q(x_0-x).$$
Then $P(x)$ is  an $n\times n$ real symmetric matrix polynomial of degree $2m$ such that 
$$P(0) = Q(x_0)>0 \qquad \text{and} \qquad \det(P(x)) = \det(Q(x_0-x)) .$$
Thus the roots of $\det(P(x))$ all have even multiplicity, so by \Cref{sq_factor}, there is an $n\times n$ real matrix polynomial $H(x)= \sum_{j=0}^{m}H_jx^j$
of degree $m$ such that
$$P(x) = H(x)^TH(x).$$
Define
$$G(x) : = H(x_0-x).$$
Then $G(x)= \sum_{j=0}^{m}G_jx^j$ is an $n\times n$ real matrix polynomial such that
\begin{align*}
Q(x) = P(x_0-x) = H(x_0-x)^TH(x_0-x)=G(x)^TG(x).
\end{align*}
\end{proof}

\section{$M_r$-invariant $\hat{H}_r$-neutral subspace}
The proof of \Cref{sq_factor} is constructive.  It hinges on finding the real skew-symmetric solution $X$ to the modified algebraic Riccati equation.  Back in \Cref{ricatti}, we found such a solution by constructing an $mn$-dimensional $M_r$-invariant $\hat{H}_r$-neutral subspace.  Following the proof of \Cref{A_invariant}, found in \cite[Theorem 2.6.3]{Ric}, also to be found in Gohberg, Lancaster, and Rodman's later book \textit{Indefinite Linear Algebra and Applications} \cite[Theorem I.3.21]{Indefinite}, to find such a subspace, we must first convert $M_r$ to its real Jordan form.  For this, assume there are $k$ Jordan blocks corresponding to real eigenvalues $\lambda_1,\lambda_2,\dots,\lambda_k$ (note some eigenvalues may be repeated as they can occur in multiple blocks).  The block corresponding to eigenvalue $\lambda_j$ has size $r_j$ and is denoted by
$$J_{r_j}(\lambda_j) = \begin{bmatrix}
					\lambda_j & 1 & & \\
					& \ddots & \ddots & \\
					&&\lambda_j & 1 \\
					&&&\lambda_j  
					\end{bmatrix} \in \RR^{r_j\times r_j}.$$
Next, assume there are $\ell$ real Jordan blocks corresponding to pairs of non-real eigenvalues $\alpha_1\pm i\beta_1, \alpha_2\pm i\beta_2, \dots, \alpha_{\ell}\pm i \beta_{\ell}$ (note again some eigenvalues may be repeated as they can occur in multiple blocks).
The block corresponding to the pair of eigenvalues $\alpha_j\pm i \beta_j$ has size $2s_{j}$ and is denoted by
$$J_{2s_{j}}(\alpha_j\pm i \beta_j) = \begin{bmatrix}
					C(\alpha_j,\beta_j) & I_2 & & \\
					& \ddots & \ddots & \\
					&&C(\alpha_j,\beta_j)  & I_2 \\
					&&&C(\alpha_j,\beta_j)
					\end{bmatrix} \in \RR^{2s_{j}\times 2s_{j}},
	\quad \text{where} \quad
	C(\alpha_j,\beta_j) = \begin{bmatrix}
							\alpha_j & \beta_j \\
							-\beta_j & \alpha_j
							\end{bmatrix}.$$
Then for some real invertible matrix $S$,
$$M_r = SJS^{-1},$$
where
\begin{equation}\label{Jordan}
J = J_{r_1}(\lambda_1)
	\oplus \cdots\oplus J_{r_k}(\lambda_{k})\oplus J_{2s_{1}}(\alpha_1\pm i\beta_1) 
	\oplus \cdots \oplus J_{2s_{\ell}}(\alpha_{\ell}\pm i \beta_{\ell}).
\end{equation}
We will form the desired $mn$-dimensional $M_r$-invariant $\hat{H}_r$-neutral subspace by extracting $nm$ columns of $S$ and putting these columns together to form a $2nm\times nm$ matrix $Y$. The desired $M_r$-invariant $\hat{H}_r$-neutral subspace is then the column space of this matrix $Y$. The construction of $Y$ is outlined as follows.

\vspace{5pt}
\noindent{\bf Construction of $Y$:}
		\begin{enumerate}[label=\arabic*.]
			\item For each Jordan block $J_{r_j}(\lambda_j)$ of a real eigenvalue, there are $r_j$  corresponding columns in $S$.  Note $r_j$ is known to be even.  Take the first $\frac{r_j}{2}$ of those columns.
			\item For each real Jordan block $J_{2s_j}(\alpha_j\pm i\beta_j)$ of a complex conjugate pair of eigenvalues, there are $2s_j$ corresponding columns in $S$.  If $s_j$ is even, take the first $s_j$ of those columns.
			\item Each remaining real Jordan block $J_{2s_j}(\alpha_j\pm i\beta_j)$ of a complex conjugate pair of eigenvalues has $2s_j$ corresponding columns in $S$, where $s_j$ is odd.  Since the algebraic multiplicity of each eigenvalue is even, we can pair up each of these blocks with another such block of the same eigenvalue, say $J_{2s_j}(\alpha_j\pm i\beta_j)$ pairs with $J_{2s_p}(\alpha_p\pm i\beta_p)$ where $\alpha_j=\alpha_p$ and $\beta_j=\beta_p$.  Take the first $s_j-1$ of the columns of $S$ corresponding to $J_{2s_j}(\alpha_j\pm i\beta_j)$ and the first $s_p-1$ of the columns of $S$ corresponding to $J_{2s_p}(\alpha_p\pm i\beta_p)$.  Lastly, take
			\begin{enumerate}
	\item The $s_j$th column of $S$ corresponding to $J_{2s_j}(\alpha_j\pm i\beta_j)$ plus the $s_p+1$st column of $S$ corresponding to $J_{2s_p}(\alpha_p\pm i\beta_p)$.
	\item The $s_j+1$st column of $S$ corresponding to $J_{2s_j}(\alpha_j\pm i\beta_j)$ minus the $s_p$th column of $S$ corresponding to $J_{2s_p}(\alpha_p\pm i\beta_p)$.
\end{enumerate}
		\end{enumerate}
Putting these columns together to form a $2nm\times nm$ matrix $Y$, we get that the desired $M_r$-invariant $\hat{H}_r$-neutral subspace as the column space of this matrix $Y$.  The reason why this works hinges on the following theorem.
\begin{thm}\cite[Theorem 2.6.1]{Ric} \label{T261}
Let $H$ be a nonsingular real $n\times n$ symmetric matrix and let $A$ be a real $H$-symmetric matrix.  Then there exists an invertible real matrix $S$ such that $J:=S^{-1}AS$ can be written as in \cref{Jordan} and $P:=S^*HS$ has the form
$$P = \epsilon_1P_{r_1}\oplus\cdots\oplus \epsilon_kP_{r_k}\oplus P_{2s_1} \oplus\cdots\oplus P_{2s_\ell},$$
where $\lambda_1,\dots,\lambda_k,$ $\alpha_1,\dots,\alpha_\ell$, and $\beta_1,\dots,\beta_\ell$ are positive numbers; $\epsilon_j = \pm 1, j=1,2,\dots,k$; $P_j$ is the $j\times j$ reversal matrix, i.e.
$$P_j = \begin{bmatrix} 
			&&1\\
			&\iddots&\\
			1&&
			\end{bmatrix}
			\in\RR^{j\times j}.$$
Moreover, the canonical form $(J,P)$ of $(A,H)$ is uniquely determined by $(A,H)$ up to permutation of blocks $(J_{r_j}(\lambda_j),\epsilon_jP_{r_j})$ for $j=1,\dots,k$ and $(J_{2s_j}(\alpha_j\pm i \beta_j),P_{2s_j})$ for $j=1,\dots,\ell$.
\end{thm}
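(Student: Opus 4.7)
The plan is to reduce the problem to studying symmetric nondegenerate intertwiners of the real Jordan form of $A$, and then to analyze one Jordan block (or one complex conjugate pair) at a time.

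First, choose a real invertible $S_0$ bringing $A$ to its real Jordan form, $S_0^{-1}AS_0=J$ as in \cref{Jordan}. Setting $\tilde H:=S_0^T H S_0$, the relation $HA=A^TH$ becomes $\tilde H J=J^T\tilde H$, with $\tilde H$ real, symmetric and nonsingular. Any further real $T$ in the centralizer of $J$ (i.e., $T^{-1}JT=J$) preserves the Jordan form and replaces $\tilde H$ by $T^T\tilde H T$. So the task reduces to choosing such a $T$ bringing $\tilde H$ to the claimed block-diagonal shape.

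Second, I would decouple the eigen-components. A standard argument, pairing a vector in the $\lambda$-root space with one in the $\mu$-root space and iterating $\tilde H(J-\lambda I)=(J-\lambda I)^T\tilde H$ with the fact that $J-\lambda I$ is nilpotent on the $\lambda$-root space, shows that root spaces for distinct real eigenvalues, and for distinct non-real conjugate pairs, are mutually $\tilde H$-orthogonal. Hence I may analyze each spectral component independently.

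Third, on each spectral component I would complexify and invoke the Weierstrass canonical form for complex symmetric $H$-selfadjoint pairs. For a single complex Jordan block $J_r(\mu)$, the intertwining $\tilde H J_r(\mu)=J_r(\mu)^T\tilde H$ forces $\tilde H$ to be Toeplitz-like and anti-triangular on the block, and an induction on block size produces a basis from the centralizer on which $\tilde H$ becomes $\epsilon P_r$ with a unique sign $\epsilon=\pm1$. Interactions between several blocks of the same eigenvalue are handled by an indefinite Gram--Schmidt process on the Jordan chains. I expect this step to be the main obstacle, since one must simultaneously normalize all the blocks while showing that the collection of signs is an invariant of $(A,H)$.

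Finally, I would descend back to $\RR$. For a real eigenvalue $\lambda$ the sign $\epsilon$ is intrinsically real and the block $\epsilon P_{r}$ survives unchanged. For a non-real eigenvalue $\mu=\alpha+i\beta$, reality of $\tilde H$ forces the Weierstrass signs on the complex blocks $J_r(\mu)$ and $J_r(\bar\mu)$ to be complex conjugate (hence equal after appropriate normalization), and the pair $J_r(\mu)\oplus J_r(\bar\mu)$ rebraids over $\RR$ into a single real block $J_{2s}(\alpha\pm i\beta)$ with $s=r$. On this real block the common complex sign can be absorbed by a real element of the extended centralizer (essentially the real form of $\mathrm{diag}(i,-i)$ on the $\mu,\bar\mu$-eigenspaces, which becomes a genuine real transformation after passage to the real Jordan basis), leaving the sign-free $P_{2s}$. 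Uniqueness up to permutation then follows because the triples $(r_j,\lambda_j,\epsilon_j)$ and $(2s_j,\alpha_j,\beta_j)$ are intrinsic invariants of the pair $(A,H)$.
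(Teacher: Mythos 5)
The paper does not prove this theorem: it is imported verbatim from Lancaster and Rodman \cite[Theorem 2.6.1]{Ric} (the canonical form of a real $H$-symmetric matrix), so there is no in-paper argument to compare yours against. Your outline does follow the standard textbook route for this result --- pass to the real Jordan form, observe that the remaining freedom is the centralizer of $J$ acting by congruence on the transformed $\tilde H$, decouple the root subspaces of distinct eigenvalues by the usual orthogonality argument, and normalize one spectral component at a time --- and steps 1, 2 and 4 are essentially right as sketched (in particular, your observation that the sign on a non-real block can be absorbed is correct: the block-diagonal matrix built from $C(0,1)$ commutes with $J_{2s}(\alpha\pm i\beta)$ and takes $P_{2s}$ to $-P_{2s}$).

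There are, however, two genuine gaps in step 3, which is where the theorem actually lives. First, you propose to complexify and invoke the canonical form for \emph{complex symmetric} $H$-selfadjoint pairs, i.e.\ the bilinear-form version with $\tilde H J=J^T\tilde H$ over $\CC$. In that setting there is no sign characteristic at all: rescaling a Jordan chain by $i$ replaces $\epsilon P_r$ by $-\epsilon P_r$, so your claim of ``a unique sign $\epsilon=\pm1$'' is false as stated. The sign $\epsilon_j$ for a real eigenvalue is an invariant only of the \emph{real} congruence class (equivalently, of the sesquilinear/Hermitian complexification $HA=A^*H$), and the argument must either stay over $\RR$ or use the Hermitian structure and then check that the chains for real eigenvalues can be chosen real. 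Second, the two hardest parts --- the simultaneous normalization of several Jordan blocks belonging to the same eigenvalue (the ``indefinite Gram--Schmidt'' you mention), and the proof that the resulting list of signs is uniquely determined by $(A,H)$ up to permutation --- are deferred rather than carried out. The uniqueness of the sign characteristic is not a routine bookkeeping fact; it requires a separate argument (e.g.\ identifying the signs with signatures of explicit quadratic forms built from $H$ and powers of $A-\lambda I$ on the root subspace). As it stands the proposal is a correct roadmap to the known proof, not a proof.
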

\noindent
This theorem tells us that when we write
$$M_r = SJS^{-1},$$
we also have
$$\hat{H}_r = S^{-*}PS^{-1},$$
where $P$ is as defined in the theorem and $S^{-*}=(S^*)^{-1}.$  We illustrate three cases with simple examples.  
First assume $J = J_{r_1}(\lambda_1)$, where we know $r_1$ is even.  Then by \Cref{T261}, $P=\epsilon_1P_{r_1}$ where $\epsilon = \pm1$. Let $Y$ be the matrix formed from the first $\frac{r_1}{2}$ columns of $S$.  We claim that the column space of $Y$ is $M_r$-invariant and $\hat{H}_r$-neutral.  Indeed,
$$M_rY= SJS^{-1}Y
			=SJ
				\begin{bmatrix}
					I_{r_1/2} \\
					0_{r_1/2}
				\end{bmatrix}
				=YJ_{r_1/2}(\lambda_1),$$
and 
$$ Y^* \hat{H}_r Y
	=  Y^* S^{-*}\epsilon_1P_{r_1}S^{-1} Y
	= \epsilon_1\begin{bmatrix}
					I_{r_1/2} &
					0_{r_1/2}
				\end{bmatrix}
				\begin{bmatrix}
					0_{r_1/2} & P_{r_1/2}\\
					P_{r_1/2} & 0_{r_1/2}
				\end{bmatrix}
				\begin{bmatrix}
					I_{r_1/2} \\
					0_{r_1/2}
				\end{bmatrix}
				=0.$$
Next assume $J = J_{2s_1}(\alpha_1\pm i\beta_1)$, where $s_1$ is even.  Then by \Cref{T261}, $P=P_{2s_1}$. Let $Y$ be the matrix formed from the first $s_1$ columns of $S$.  Then the column space of $Y$ is $M_r$-invariant and $\hat{H}_r$-neutral.  Indeed,
$$M_rY= SJS^{-1}Y
				=YJ_{s_1}(\alpha_1\pm i \beta_1),$$
and 
$$ Y^* \hat{H}_r Y
	=  Y^* S^{-*}P_{2s_1}S^{-1} Y
	= \begin{bmatrix}
					I_{s_1} &
					0_{s_1}
				\end{bmatrix}
				\begin{bmatrix}
					0_{s_1} & P_{s_1}\\
					P_{s_1} & 0_{s_1}
				\end{bmatrix}
				\begin{bmatrix}
					I_{s_1} \\
					0_{s_1}
				\end{bmatrix}
				=0.$$
Finally, assume 
$$J = J_{2s_1}(\alpha_1\pm i\beta_1) \oplus J_{2s_2}(\alpha_2\pm i\beta_2)$$
where $s_1$ and $s_2$ are odd, $\alpha_1=\alpha_2$, and $\beta_1=\beta_2$. Form the $(s_1+s_2)\times(s_1+s_2)$ matrix $Y$ using columns $1,2,\dots,s_1-1$ and $2s_1+1,2s_1+2, 2s_1+s_2-1$ from $S$.  Additionally, form the second to last column of $Y$ as column $s_1$ of $S$ plus column $2s_1+s_2+1$ of $S$.  Finally, form the last column of $Y$ as column $s_1+1$ of $S$ minus column $2s_1+s_2$ of $S$.  
Then the column space of $Y$ is $M_r$-invariant and $\hat{H}_r$-neutral.  Indeed,  let $S_j$ denote column $j$ of $S$.  Let $\alpha=\alpha_1=\alpha_2$ and $\beta=\beta_1=\beta_2$.  Then
$$Y = \begin{bmatrix} S_1 & \cdots & S_{s_1-1} & S_{2s_1+1} & \cdots &S_{2s_1+s_2-1} & S_{s_1}+S_{2s_1+s_2+1} & S_{s_1+1}-S_{2s_1+s_2} \end{bmatrix}.$$
Letting $0_j$ denote the $j\times j$ zero matrix (and generic 0 may represent a zero matrix whose size can be inferred in the context of the other elements of the matrix), we have
$$M_rY = SJS^{-1}Y = SJ \begin{bmatrix}
			I_{s_1-1} & 0 & 0 & 0 \\
			0 & 0 &1 & 0\\
			0 & 0 &0 &1\\
			0_{s_1-1} & 0 & 0 & 0\\
			0 & I_{s_2-1} & 0 & 0 \\
			0 & 0 &0 & -1\\
			0 & 0 &1 &0\\
			0 & 0_{s_2-1} & 0 & 0		
			\end{bmatrix}
			=S
			\begin{bmatrix}
				J_{s_1-1}(\alpha\pm i\beta)  & 0 & * & * \\
				0 & 0 &\alpha & \beta\\
				0 & 0 &-\beta &\alpha\\
				0_{s_1-1} & 0 & 0 & 0\\
				0 & J_{s_2-1}(\alpha\pm i \beta)  & * & * \\
				0 & 0 &\beta & -\alpha\\
				0 & 0 &\alpha &\beta\\
				0 & 0_{s_2-1} & 0 & 0
			\end{bmatrix},$$
 so $Y$ is $M_r$-invariant. Next, by \Cref{T261}, we know $\hat{H}_r = S^{-*}PS$ where 
$$P = P_{2s_1}\oplus P_{2s_2}.$$
Then
\begin{align*}
 Y^* \hat{H}_r Y
	& =  Y^* S^{-*}(P_{2s_1}\oplus P_{2s_2})S^{-1} Y \\[5pt]
	& =\begin{bmatrix}
			I_{s_1-1} & 0 &0&0_{s_1-1}&0&0&0&0\\
			0&0&0&0&I_{s_2-1}&0&0&0_{s_2-1} \\
			0&1&0&0&0&0&1&0\\
			0&0&1&0&0&-1&0&0
			\end{bmatrix}
			(P_{2s_1}\oplus P_{2s_2})
			\begin{bmatrix}
			I_{s_1-1} & 0 & 0 & 0 \\
			0 & 0 &1 & 0\\
			0 & 0 &0 &1\\
			0_{s_1-1} & 0 & 0 & 0\\
			0 & I_{s_2-1} & 0 & 0 \\
			0 & 0 &0 & -1\\
			0 & 0 &1 &0\\
			0 & 0_{s_2-1} & 0 & 0
			\end{bmatrix}. \\[5pt]
	& =\begin{bmatrix}
			I_{s_1-1} & 0 &0&0_{s_1-1}&0&0&0&0\\
			0&0&0&0&I_{s_2-1}&0&0&0_{s_2-1} \\
			0&1&0&0&0&0&1&0\\
			0&0&1&0&0&-1&0&0
			\end{bmatrix}
			\begin{bmatrix}
			0_{s_1-1} & 0 & 0 & 0 \\
			0 & 0 &0 & 1\\
			0 & 0 &1 &0\\
			P_{s_1-1} & 0 & 0 & 0\\
			0 & 0_{s_2-1} & 0 & 0 \\
			0 & 0 &1 & 0\\
			0 & 0 &0 &-1\\
			0 & P_{s_2-1} & 0 & 0
			\end{bmatrix}=0.
\end{align*}
These three cases together show why the general procedure for picking columns of $S$ generate the desired subspace.

\section{Algorithm}
Now that we have the construction of the invariant subspace, we can put everything together to get an explicit algorithm as outlined below.  To the best of our knowledge, no algorithm for finding the real factorization $Q(x) = G(x)^TG(x)$ exists in the literature. \\[10pt]
%
\hrule height 1pt 
\vspace{3pt}
\noindent
\textbf{Algorithm 1} Real Factorization of PSD Matrix Polynomial
\vspace{3pt}
\hrule
\noindent
\textbf{Input:} A real $n\times n$ regular symmetric positive semidefinite matrix polynomial $Q(x) = \sum_{j=0}^{2m} Q_jx^j$ for which $\det(Q(x))$ has only roots of even multiplicity.
\begin{enumerate}[label=\arabic*.]
	\item Fix $x_0\in\RR$ such that $\det(Q(x_0))\ne 0$.
	\item Set $\hat{P}(x) = Q(x_0-x) =: \sum_{j=0}^{2m} \hat{P}_jx^j$.
	\item Set $P(x) = \hat{P}_0^{-1/2}\hat{P}(x)\hat{P}_0^{-1/2} =: \sum_{j=0}^{2m} P_jx^j.$
	\item Set 
	$$M_r = \left[\begin{array}{cccc|ccccc}
		-\frac{1}{2}P_1 &&&&		-I_n &&& \\[5pt]
		I_n & 0 &&&			 & 0 && \\
		& \ddots & \ddots &	&	& & \ddots & \\
		&& I_n & 0	&		&& & 0 \\[5pt]\hline
		&&&&&&&&\\
		P_2-\frac{1}{4}P_1^2 & \frac{1}{2}P_3 & & &		-\frac{1}{2}P_1 &I_n&& \\[5pt]
		\frac{1}{2}P_3 & P_4 & \ddots & &				 & 0 &\ddots& \\[5pt]
		&\ddots & \ddots & \frac{1}{2}P_{2m-1}	&		&  & \ddots & I_n \\[5pt]
		&& \frac{1}{2}P_{2m-1} & P_{2m}&				&& & 0
	\end{array}\right] .$$
	\item Find the real Jordan canonical form $M_r = SJS^{-1}$.
	\item Choose columns of the matrix $S$ to form a $2nm\times nm$ matrix $Y$ as outlined under `Construction of $Y$' in the previous section.

	\item Define the $nm\times nm$ matrices $X_1$ and $X_2$ by 
		$Y = \begin{bmatrix} X_1 \\X_2\end{bmatrix}$
		and compute $X = X_2X_1^{-1}$.
	\item Set
	$$F_X = 
	\begin{bmatrix}
			I_n & \frac{1}{2}P_1 & & \\[10pt]
			\frac{1}{2}P_1 & P_2 & \ddots & \\[10pt]
			&\ddots & \ddots & \frac{1}{2}P_{2m-1} \\[10pt]
			&& \frac{1}{2}P_{2m-1} & P_{2m}
		\end{bmatrix} 
			+ \begin{bmatrix} 0_{nm\times n} & X \\ 0_n & 0_{n\times nm} \end{bmatrix} 
			- \begin{bmatrix} 0_{n\times nm} & 0_n \\ X & 0_{nm\times n}\end{bmatrix}
		.$$
	\item By construction, $F_X$ is  positive semidefinite of rank $n$, so factorize as
		$$F_X=\begin{bmatrix} H_0^T \\ \vdots \\ H_m^T \end{bmatrix}
		\begin{bmatrix} H_0 & \cdots & H_m \end{bmatrix}, $$
		where $H_j$ is real $n\times n$ for all $j$.
	\item Set $H(x) = \sum_{j=0}^m H_jx^j$.
	\item Set $G(x) = \sum_{j=0}^m P_0^{1/2}H_j(x_0-x)^j =: \sum_{j=0}^mG_jx^j.$
\end{enumerate}
\textbf{Output:} A real $n\times n$ matrix polynomial $G(x) = \sum_{j=0}^{m} G_jx^j$ such that
$$Q(x) = G(x)^TG(x).$$
\hrule height 1pt
\vspace{3pt}

%
\subsection{Examples}
Let us illustrate the above ideas in a few examples.  Note that the Jordan canonical factorization in the algorithm relies on exact computation, so the examples were computed in Maple. The first example is the real eigenvalue case.  The second example is the non-real eigenvalue case with even $s_j$.  

\begin{example}
\normalfont
Take 
$$Q(x) = \begin{bmatrix}
		2x^2+2x+1 & -4x^2-3x \\
		-4x^2-3x &  8x^2+4x+1
		\end{bmatrix}
		=\begin{bmatrix} 
			1&0\\0&1
			\end{bmatrix}
			+x\begin{bmatrix}
				2&-3\\-3&4 
				\end{bmatrix}
			+x^2\begin{bmatrix}
				2&-4\\-4&8
				\end{bmatrix}.$$
Then $M_r = SJS^{-1} $ for
$$J = \begin{bmatrix}
    0 &    1   &  0  &   0\\
     0   & 0   &  0  &   0\\
     0    & 0   &  -3   &  1\\
     0    & 0   &  0   &  -3
	\end{bmatrix}
, \ 
S = \begin{bmatrix}
   \frac{2}{9} & \frac{37}{54} & \frac{-5}{18} & \frac{17}{54} \\[5pt]
   \frac{1}{9} & \frac{10}{27} & \frac{5}{18} & \frac{-10}{27} \\[5pt]
   \frac{-1}{18} & \frac{-19}{54} & \frac{-5}{36} & \frac{19}{54} \\[5pt]
   \frac{1}{9} & \frac{19}{108} & \frac{-5}{36} & \frac{-19}{108}
   \end{bmatrix}.
$$
We have $J = J_2\left( 0\right) \oplus J_2\left( -3\right)$, so $r_1 = r_2 = 2$.  Thus we take the 1st column of $S$ corresponding to the first Jordan block as well as the 1st column of $S$ corresponding to the  second Jordan block.
$$\begin{bmatrix}
	  \frac{2}{9} &   \frac{-5}{18}\\[5pt]
      \frac{1}{9}& \frac{5}{18}  \\[5pt]
   \frac{-1}{18} & \frac{-5}{36} \\[5pt]
  \frac{1}{9} &   \frac{-5}{36}
   \end{bmatrix} =: \begin{bmatrix} X_1 \\ X_2 \end{bmatrix}. $$
 Our invariant subspace is thus 
 $$\Image \begin{bmatrix} X_1 \\ X_2 \end{bmatrix} = \Image \begin{bmatrix} I \\ X_2 X_1^{-1}\end{bmatrix}.$$
Then,
 $$X = X_2X_1^{-1} = \begin{bmatrix} 0 & \frac{-1}{2} \\[3pt] \frac{1}{2} & 0 \end{bmatrix}.$$
Thus 
$$F_{{X}}=F_0 + \begin{bmatrix} 0 & {X} \\ 0 & 0\end{bmatrix} - \begin{bmatrix} 0 & 0 \\ {X} & 0\end{bmatrix} 
	=\begin{bmatrix}
     1 &    0  &   1  &  -2\\
     0 &    1  &  -1  &   2\\
     1 &   -1  &   2  &  -4\\
    -2 &    2  &  -4  &   8
		\end{bmatrix}.$$
We factorize as 
$$F = \begin{bmatrix}
		1&0\\
		0&1\\
		1&-1\\
		-2&2
		\end{bmatrix}
		\begin{bmatrix}
		1&0&1&-2\\
		0&1&-1&2
		\end{bmatrix}.$$
Thus 
$$G_0 = \begin{bmatrix}
			1&0\\
			0&1
			\end{bmatrix},
\qquad
G_1 = \begin{bmatrix}
		1&-2\\
		-1&2
	\end{bmatrix},
\qquad
G(x) = G_0+xG_1.$$
We can verify $Q(x) = G(x)^TG(x).$
\end{example}

\begin{example}
\normalfont
Take now
$$Q(x) = \begin{bmatrix}
			2x^2+2x+1 & x^2+2x \\
			x^2+2x & 13x^2+4x+1
		\end{bmatrix}
		=\begin{bmatrix} 
			1&0\\0&1
			\end{bmatrix}
			+x\begin{bmatrix}
				2&2\\2&4 
				\end{bmatrix}
			+x^2\begin{bmatrix}
				2&1\\1&13
				\end{bmatrix}.$$
Then $M_r = SJS^{-1} $ for
$$J = \begin{bmatrix}
	\frac{-3}{2} & \frac{\sqrt{11}}{2} & 1 & 0 \\[5pt]
	\frac{-\sqrt{11}}{2} & \frac{-3}{2} & 0 & 1 \\[5pt]
	0 & 0 & \frac{-3}{2} & \frac{\sqrt{11} }{2}\\[5pt]
	0 & 0 & \frac{-\sqrt{11}}{2} & \frac{-3}{2}
	\end{bmatrix}
, \ 
S = \begin{bmatrix}
    \frac{4}{11} & \frac{-2\sqrt{11}}{11} & \frac{1}{2} & \frac{5\sqrt{11}}{242} \\[5pt]
    \frac{-3}{11} & \frac{-\sqrt{11}}{11} & 0 & \frac{5\sqrt{11}}{121} \\[5pt]
    \frac{-6}{11} & \frac{-2\sqrt{11}}{11} & 0 & \frac{-12\sqrt{11}}{121} \\[5pt]
    \frac{-8}{11} & \frac{4\sqrt{11}}{11} & 0 & \frac{6\sqrt{11}}{121}
   \end{bmatrix}.
$$
We have $J = J_4\left(\frac{3}{2}\pm i \frac{\sqrt{11}}{2}\right)$, so $s_1 = 2$.  Thus we take the first two columns of $S$ corresponding to the first and only Jordan block.
$$\begin{bmatrix}
	 \frac{4}{11} & \frac{-2\sqrt{11}}{11}  \\[5pt]
    \frac{-3}{11} & \frac{-\sqrt{11}}{11} \\[5pt]
    \frac{-6}{11} & \frac{-2\sqrt{11}}{11} \\[5pt]
    \frac{-8}{11} & \frac{4\sqrt{11}}{11}
   \end{bmatrix} =: \begin{bmatrix} X_1 \\ X_2 \end{bmatrix} .$$
 Our invariant subspace is thus 
 $$\Image \begin{bmatrix} X_1 \\ X_2 \end{bmatrix} = \Image \begin{bmatrix} I \\ X_2 X_1^{-1}\end{bmatrix}.$$
Then,
 $$X = X_2X_1^{-1} = \begin{bmatrix} 0 & 2 \\ -2 & 0 \end{bmatrix}.$$
Thus 
$$F_{{X}}=F_0 + \begin{bmatrix} 0 & {X} \\ 0 & 0\end{bmatrix} - \begin{bmatrix} 0 & 0 \\ {X} & 0\end{bmatrix} 
	=\begin{bmatrix}
		1&0&1&3\\
		0&1&-1&2\\
		1&-1&2&1\\
		3&2&1&13
		\end{bmatrix}.$$
We factorize as 
$$F = \begin{bmatrix}
		1&0\\
		0&1\\
		1&-1\\
		3&2
		\end{bmatrix}
		\begin{bmatrix}
		1&0&1&3\\
		0&1&-1&2
		\end{bmatrix}.$$
Thus 
$$G_0 = \begin{bmatrix}
			1&0\\
			0&1
			\end{bmatrix},
\qquad
G_1 = \begin{bmatrix}
		1&3\\
		-1&2
	\end{bmatrix},
\qquad
G(x) = G_0+xG_1.$$
We can verify $Q(x) = G(x)^TG(x).$
\end{example}

\subsection{Numerical Considerations}

Note that the algorithm presented here relies on being able to compute the real Jordan canonical form of a matrix.  This is not numerically stable and thus the algorithm assumes exact computation.  However, if all  eigenvalues have algebraic multiplicity 2 and geometric multiplicity 1 (i.e. in \cref{Jordan}, $r_j=2, j=1,2,\dots,k$ and $s_j=2, j=1,2,\dots,\ell$), we can still find the required $M_r$-invariant $\hat{H}_r$-neutral subspace.  Indeed, for size 2 blocks of real eigenvalues, our algorithm says to take the first column of $S$ associated with that block.  This is simply the eigenvector associated with that eigenvalue.  Similarly, for a size 4 block of a pair of non-real eigenvalues, our algorithm says to take the first two columns of $S$ associated with that block.  These are the real and imaginary parts of the eigenvector associated with that eigenvalue pair.   In short, all we need here are the eigenvectors of the $M_r$ matrix.  

We tested a numerical implementation of our algorithm in MATLAB by running 100 trials.  In each trial, a random matrix size $n$ and degree $m$ were chosen between 2 and 8.  A matrix polynomial $G(x)=\sum_{i=0}^m G_ix^i$ of size $n\times n$ and degree $m$ was created by taking $G_0=I_n$ and randomly generating $G_1, \dots G_m$.  Then the coefficients $Q_0,Q_1,\dots,Q_{2m}$ of the matrix polynomial $Q(x) = \sum_{i=0}^{2m} Q_ix^i$  were computed by setting $Q(x) = G(x)^TG(x)$.  Operating under the assumption that all  eigenvalues have algebraic multiplicity 2 and geometric multiplicity 1, Algorithm 1 was implemented, where the matrix $Y$ in step 6 was formed from the eigenvectors of $M_r$ as outlined in the previous paragraph.  The algorithm outputted coefficients $\hat{G}_i$ for the matrix polynomial $\hat{G}(x) = \sum_{i=0}^m \hat{G}_ix^i$.  We computed coefficients $\hat{Q}_i$ for the matrix polynomial $\hat{Q}(x) = \hat{G}(x)^T\hat{G}(x)$.  Since in application, only $Q(x)$ and not $G(x)$ would be known before the implementation of the algorithm, the error of the trial was calculated as the maximum absolute entry of 
$$\begin{bmatrix} Q_0 & Q_1 & \cdots & Q_{2m} \end{bmatrix}
	-\begin{bmatrix} \hat{Q}_0 & \hat{Q}_1 & \cdots & \hat{Q}_{2m} \end{bmatrix}.$$
The overall worst error among all 100 trials was on the order of $10^{-6}$.  It is important to note, though, that for particular examples where the multiplicities of eigenvalues are higher, the algorithm does not work in any reliable way.  

Finally, one can also use numerical methods to conclude reliably that if an eigenvalue of $M_r$ has algebraic multiplicity equal to one, and then the corresponding matrix polynomial $Q(x)$ will not have a real factorization.

\section*{Acknowledgement}
We thank the referee for their careful reading of the manuscript and their detailed advice, which, among other things, lead to the addition of the subsection on numerical considerations.

%
\bibliographystyle{plain}
\bibliography{FPaperV4}

\begin{thebibliography}{10}

\bibitem{Moments}
Mih{\'a}ly Bakonyi and Hugo~J. Woerdeman.
\newblock {\em Matrix Completions, Moments, and Sums of Hermitian Squares}.
\newblock Princeton Series in Applied Mathematics. Princeton University Press,
  2011.

\bibitem{Cao}
Lei Cao and Hugo~J. Woerdeman.
\newblock Real zero polynomials and {A}. {H}orn's problem.
\newblock {\em Linear Algebra Appl.}, 552:147--158, 2018.

\bibitem{Spectral}
Tongwen Chen and Bruce~A. Francis.
\newblock Spectral and inner-outer factorizations of rational matrices.
\newblock {\em SIAM Journal on Matrix Analysis and Applications}, 10(1):1--17,
  1989.

\bibitem{Coppel}
William~Andrew Coppel.
\newblock Matrix quadratic equations.
\newblock {\em Bulletin of the Australian Mathematical Society}, 10(3):377 --
  401, 1974.

\bibitem{Ten}
Ingrid Daubechies.
\newblock {\em Ten lectures on wavelets}, volume~61 of {\em CBMS-NSF Regional
  Conference Series in Applied Mathematics}.
\newblock Society for Industrial and Applied Mathematics (SIAM), Philadelphia,
  PA, 1992.

\bibitem{Stochastic}
J.~L. Doob.
\newblock {\em Stochastic processes}.
\newblock Wiley Classics Library. John Wiley \& Sons, Inc., New York, 1990.
\newblock Reprint of the 1953 original, A Wiley-Interscience Publication.

\bibitem{49}
John Doyle, Keith Glover, P.~Khargonekar, and Bruce Francis.
\newblock State space solution to standard {$H_2$} and {$H_\infty$} control
  problem.
\newblock {\em IEEE Transactions on Automatic Control}, 34:831 -- 847, 1989.

\bibitem{500}
Michael~A. Dritschel and James Rovnyak.
\newblock The operator {F}ej{\'e}r-{R}iesz theorem.
\newblock In Sheldon Axler, Peter Rosenthal, and Donald Sarason, editors, {\em
  A Glimpse at Hilbert Space Operators: Paul R. Halmos in Memoriam}, pages
  223--254. Springer Basel, Basel, 2010.

\bibitem{187}
Michael~A. Dritschel and Hugo~J. Woerdeman.
\newblock Outer factorizations in one and several variables.
\newblock {\em Transactions of the American Mathematical Society},
  357(11):4661--4679, 2005.

\bibitem{Francis}
Bruce~A. Francis.
\newblock {\em A course in {$H_\infty$} control theory}, volume~88 of {\em
  Lecture Notes in Control and Information Sciences}.
\newblock Springer-Verlag, Berlin, 1987.

\bibitem{Ftrig}
Tryphon~T. Georgiou and Anders Lindquist.
\newblock On a {F}ej\'{e}r-{R}iesz factorization of generalized trigonometric
  polynomials.
\newblock {\em Commun. Inf. Syst.}, 21(3):371--384, 2021.

\bibitem{Pextensions}
Jeffrey~S. Geronimo and Hugo~J. Woerdeman.
\newblock Positive extensions, {F}ej\'{e}r-{R}iesz factorization and
  autoregressive filters in two variables.
\newblock {\em Ann. of Math. (2)}, 160(3):839--906, 2004.

\bibitem{259}
Israel Gohberg.
\newblock The factorization problem for operator functions.
\newblock {\em Izvestiya Akademii Nauk SSSR Seriya Matematicheskaya},
  28:1055--1082, 1964.

\bibitem{Indefinite}
Israel Gohberg, Peter Lancaster, and Leiba Rodman.
\newblock {\em Indefinite Linear Algebra and Applications}.
\newblock Birkh\"auser, 2005.

\bibitem{Polynomials}
Israel Gohberg, Peter Lancaster, and Leiba Rodman.
\newblock {\em Matrix Polynomials}.
\newblock SIAM, 2009.

\bibitem{Stable}
Anatolii Grinshpan, Dmitry~S. Kaliuzhnyi-Verbovetskyi, Victor Vinnikov, and
  Hugo~J. Woerdeman.
\newblock Stable and real-zero polynomials in two variables.
\newblock {\em Multidimens. Syst. Signal Process.}, 27(1):1--26, 2016.

\bibitem{Hachez}
Yvan Hachez and Hugo~J. Woerdeman.
\newblock The {F}ischer-{F}robenius transformation and outer factorization.
\newblock In {\em Operator theory, structured matrices, and dilations},
  volume~10 of {\em Theta Series in Advanced Mathematics}, pages 181--203.
  Theta, Bucharest, 2007.

\bibitem{Hanselka}
Christoph Hanselka and Rainer Sinn.
\newblock Positive semidefinite univariate matrix polynomials.
\newblock {\em Mathematische Zeitschrift}, 292(1):83--101, 2019.

\bibitem{307}
Henry Helson.
\newblock {\em Lectures on invariant subspaces}.
\newblock Academic Press, 1964.

\bibitem{Ric}
Peter Lancaster and Leiba Rodman.
\newblock {\em Algebraic Riccati Equations}.
\newblock Oxford University Press, 1995.

\bibitem{Review}
Peter Lancaster and Ion Zaballa.
\newblock Spectral theory for self-adjoint quadratic eigenvalue problems - a
  review.
\newblock {\em Electron. J. Linear Algebra}, 37:211--246, 2021.

\bibitem{multivariable}
J.~W. McLean and H.~J. Woerdeman.
\newblock Spectral factorizations and sums of squares representations via
  semidefinite programming.
\newblock {\em SIAM J. Matrix Anal. Appl.}, 23(3):646--655, 2001/02.

\bibitem{Symmetries}
A.~C.~M. Ran and L.~Rodman.
\newblock Factorization of matrix polynomials with symmetries.
\newblock {\em SIAM Journal on Matrix Analysis and Applications},
  15(3):845--864, 1994.

\bibitem{494}
Murray Rosenblatt.
\newblock A multi-dimensional prediction problem.
\newblock {\em Arkiv f\"{o}r Matematik}, 3(5):407--424, 1958.

\bibitem{495}
Marvin Rosenblum.
\newblock Vectorial teoplitz operators and the {F}ej\'{e}r-{R}iesz theorem.
\newblock {\em Journal of Mathematical Analysis and Applications}, 23:139--147,
  1968.

\bibitem{Willems}
J.~Willems.
\newblock Least squares stationary optimal control and the algebraic {R}iccati
  equation.
\newblock {\em IEEE Transactions on Automatic Control}, 16(6):621--634, 1971.

\bibitem{Zorzi}
Mattia Zorzi.
\newblock Multivariate spectral estimation based on the concept of optimal
  prediction.
\newblock {\em IEEE Trans. Automat. Control}, 60(6):1647--1652, 2015.

\end{thebibliography}

\end{document}